


\documentclass[11pt, reqno]{amsart}
\usepackage[margin=3.6cm]{geometry}

\usepackage{amsmath, amsfonts, amsthm, amssymb}
\usepackage{exscale}
\usepackage{cite}
\usepackage{epsfig}
\usepackage{amscd}
\usepackage{graphicx}

\usepackage{color}

\renewcommand{\geq}{\geqslant}
\renewcommand{\leq}{\leqslant}

\newtheorem{theorem}{Theorem}

\newtheorem{proposition}{Proposition}[section]

\newtheorem{lemma}[proposition]{Lemma}
\newtheorem*{main-theorem}{Main Theorem}
\newtheorem*{theorem*}{Theorem}

\theoremstyle{definition}

\newtheorem{remark}[proposition]{Remark}

\newtheorem*{remark*}{Remark}

\numberwithin{equation}{section}

\def\phi{\varphi}

\def\reals{{\mathbb R}}

\def\Ci{{\mathcal C}^\infty}

\def\supp{\mathrm{supp}\,}

\def\O{{\mathcal O}}
\def\SS{{\mathbb S}}

\def\phi{\varphi}

\def\dist{\text{dist}\,}

\def\be{\begin{eqnarray*}}
\def\ee{\end{eqnarray*}}
\def\ben{\begin{eqnarray}}
\def\een{\end{eqnarray}}

\def\L2R{L_{\text{Rest}}^2}

\def\11{\mathds{1}}

\def\L2c{L^2_{\text{comp}}}

\def\p{\partial}

\def\bu{\bar{u}}

\begin{document}

\title[One point non-concentration]{A one point non-concentration
  estimate for Laplace eigenfunctions on polygons}

\author[H. Christianson]{Hans Christianson}
\address[H. Christianson]{ Department of Mathematics, University of North Carolina.\medskip}
 \email{hans@math.unc.edu}

%
%
%
\begin{abstract}

  In this paper we consider  eigenfunctions of the Laplacian
  on a planar domain with polygonal boundary with Dirichlet, Neumann,
  or mixed boundary conditions.  The main result is a quantitative
  estimate on the $L^2$ mass of eigenfunctions near a point in terms
  of the distance to the nearest non-adjacent boundary face.  In
  particular, eigenfunctions cannot concentrate completely at any one
  single point.  
  The technique of proof is to
  use the commutator ideas from the recent work of the author \cite{Chr-tri,Chr-simp}
 on
 triangles and simplices.


\end{abstract}

\maketitle

\section{Introduction}
In this paper, we study the distribution of interior $L^2$ mass of
Laplace eigenfunctions on polygonal domains with Dirichlet, Neumann,
or mixed boundary conditions.  Let $\Omega \subset
\reals^2$ be a bounded open set.  Eigenfunctions on $\Omega$ are used
to model, for example, the fundamental modes of vibration for a drum
with shape $\Omega$, as well as other physical phenomena.  Eigenfunctions are highly sensitive to the
geometry of the boundary of $\Omega$ and the boundary conditions
imposed on $\p \Omega$.  This is part of the
``classical-quantum correspondence''.  The classical problem in a
planar domain is to consider the trajectories of a billiard ball on a
table shaped like $\Omega$.  An ideal billiard ball will follow a straight
line until it meets the boundary, at which point it will reflect
according to Snell's law (angle in equals angle out).   If the boundary of $\Omega$ is
sufficiently smooth, then one can describe the billiard trajectories
as a map on the closed co-ball bundle $\overline{B^* \p \Omega}$, by specifying the point of impact on the
boundary and the incoming direction.  
Consider now a wave on $\Omega$.
Waves tend to travel in packets along straight lines in planar domains
as well, and
reflect off boundary walls according to Snell's law.  But wave packets cannot be completely localized to a
single billiard ball trajectory, so they can do crazy things when they
reflect off a wall, and the curvature and regularity of the boundary
of $\Omega$ at the reflection  point can cause wave packets to focus,
de-focus, disperse, diffract, glance, and many other possibilities.

If $\p \Omega$ is a closed polygonal path, then at each corner the
boundary has only Lipschitz regularity, while away from the corners,
the boundary is affine, so $\Ci$.  Imagining a billiard ball on a
polygonal domain $\Omega$, one begins to see subtleties even in the
classical problem.  How does one specify how a billiard ball reflects
when it heads into a corner?

By separation of variables, the study of solutions to the wave
equation on $\Omega$ can be reduced to studying eigenfunctions.  In
this paper, we consider the following eigenfunction problem.  Let
$\{u_j \}$ be a sequence of functions satisfying
\begin{equation}
  \label{E:ef-1}
\begin{cases}
  - \Delta u_j = \lambda_j^2 u_j, \,\, \text{in } \Omega, \\
  B u_j = 0 \text{ on } \p \Omega, \\
  \| u_j \|_{L^2 ( \Omega )} = 1.
\end{cases}
\end{equation}
Here $B$ is a boundary operator, $Bu_j = u_j$ or $B u_j = \p_\nu u_j$
on each affine segment of $\p \Omega$ (see Subsection \ref{SS:notation} for a rigorous
definition).  
It is classical that this problem has a countably infinite number of
solutions, from which one obtains an orthonormal basis for $L^2$.
With this sequence of eigenfunctions comes a sequence of eigenvalues
$\lambda^{2}_j$, with $\lambda_j \to \infty$.  The size of the $\lambda^{-1}_j$s corresponds to
the wavelength, so $\lambda_j$ is proportional to the frequency of
oscillation of associated waves.  Hence the limit as $\lambda_j \to \infty$ describes ``high-frequency''
behaviour of eigenfunctions.

The goal of this paper is to study how eigenfunctions can concentrate
(or more precisely, {\it not} concentrate)
in the limit $\lambda_j \to \infty$.  Non-concentration results have a
huge history, which we briefly discuss below in Subsection \ref{SS:History}.    Many of the techniques used in these previous works
make use of {\it microlocal analysis}, which is the study of waves in
both space and frequency, using knowledge about the associated
classical problem.  This means that to study eigenfunctions using these
tools, a certain amount of regularity of the boundary is necessary.
This is so that the classical billiard map is well-defined, and the
billiard flow is sufficiently smooth that quantum observables can be
constructed roughly as functions of the billiard flow.

Of course in the case of polygonal domains, the billiard flow is
generally not
smooth, and many techniques from microlocal analysis break down.
In the present
work, we bypass the use of microlocal analysis in favor of commutator
methods.  This  has the virtue of being very robust and requiring very
few assumptions on the regularity of the boundary.  However, it has
the defect of losing any information about the {\it frequency}
localization of eigenfunctions.

The main result in this paper 
is a quantitative estimate on the $L^2$ mass of a sequence of
eigenfunctions in a neighbourhood of a single point.
A novelty of this work is that we also obtain a quantitative estimate
in a neighbourhood of a boundary point as well.


\begin{theorem}
  \label{T:poly-nc}
   Let $\Omega \subset \reals^2$ be a connected, bounded, and open set with polygonal boundary, and let $\{ u_j \}$ be a sequence of  eigenfunctions
on $\Omega$ satisfying \eqref{E:ef-1}.
Let $p_0 \in  \overline{\Omega}$ be any point (including on the
boundary), and
let
$d = \dist (p_0, F')>0$, where $F' \subset \p \Omega$ is the nearest
boundary face not adjacent to $p_0$.  Then for each $0 < \alpha <1$, 
\begin{equation}
  \label{E:interior-est-0}
  \limsup_{\lambda_j \to \infty} \| u \|^2_{L^2 ( D(p_0, \alpha d  ) ) } \leq
  \frac{1}{2-\alpha}.
\end{equation}

\end{theorem}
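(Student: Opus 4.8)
The plan is to adapt the commutator method from \cite{Chr-tri,Chr-simp} to the single-point setting. The essential idea is to test the eigenvalue equation against a carefully chosen vector field (a first-order differential operator) $A$ and exploit the positivity structure of the resulting commutator $[-\Delta, A]$. Concretely, I would work in polar-type coordinates centered at $p_0$: write $r = |x - p_0|$ and let $A = a(r)\,\p_r + (\text{lower order})$, where $a(r)$ is a radial function chosen so that $a(0)$ and $a'(r)$ have the right signs on the ball $D(p_0, d)$. The key algebraic fact is that for the generator of dilations $r\p_r$, one has $[-\Delta, r\p_r] = -2\Delta + (\text{lower order})$ (in two dimensions), so a commutator of $-\Delta$ with a radial vector field produces a positive multiple of $|\nabla u|^2$ plus a multiple of $\lambda_j^2 |u|^2$ from the density coming from $a'(r)$, plus terms one can control.

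Next I would set up the integration by parts carefully accounting for boundary contributions. Since $\langle [-\Delta, A] u_j, u_j \rangle = \langle (-\Delta - \lambda_j^2) A u_j, u_j \rangle - \langle A u_j, (-\Delta - \lambda_j^2) u_j\rangle = 0$ modulo boundary terms (using $-\Delta u_j = \lambda_j^2 u_j$), the whole argument reduces to showing that the boundary terms have a favorable sign or are negligible. Here is where the hypothesis enters: because the nearest non-adjacent face $F'$ is at distance $d$, the ball $D(p_0, d)$ meets $\p\Omega$ only in faces \emph{adjacent} to $p_0$ — and on such faces, either the Dirichlet or the Neumann condition forces the relevant boundary integrand to vanish or to have the correct sign, provided the vector field $A$ is chosen tangent to those faces (or at least so that $a(r)$ times the normal component behaves well). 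The radial vector field emanating from $p_0$ has the crucial geometric feature that on any line through $p_0$ — hence on any face adjacent to $p_0$, extended — it is tangential, so $\p_\nu$ annihilates it in the right way; for faces not through $p_0$ but adjacent at a vertex, a short additional geometric argument using the Lipschitz corner is needed.

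With the boundary terms handled, the interior identity becomes an inequality of the form
\begin{equation*}
  \int_{D(p_0, d)} \left( \text{positive} \right) |\nabla u_j|^2
  + \int_{D(p_0,d)} b(r)\, \lambda_j^2 |u_j|^2 \, \leq \, (\text{controlled error}),
\end{equation*}
where $b(r)$ is a weight, negative near $r=0$ and positive near $r = d$, engineered from $a'(r)$. Choosing $a$ so that $b$ is, say, $-$(const) on $D(p_0, \alpha d)$ and $+$(const) on the annulus $\alpha d < r < d$, and using the normalization $\|u_j\|_{L^2(\Omega)} = 1$ together with an argument to absorb the mass outside $D(p_0,d)$, one arrives at a bound of the shape $\|u_j\|^2_{L^2(D(p_0,\alpha d))} \le \frac{1}{2-\alpha} + o(1)$. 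The constant $\frac{1}{2-\alpha}$ should drop out of optimizing the ratio of the weights $b$ on the inner ball versus the outer annulus, much as in the triangle paper, where an analogous sharp constant appears.

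The main obstacle I anticipate is controlling the boundary terms at the \emph{vertices} adjacent to $p_0$ and near faces that touch $p_0$ only at an endpoint: the radial vector field from $p_0$ is not globally tangent to such faces, and the Lipschitz (conical) singularity of $\p\Omega$ there means one cannot naively integrate by parts. I expect this requires either a separate cutoff near each such vertex combined with the known fact that eigenfunctions do not concentrate at polygonal vertices (or a crude $H^1$ bound near corners), or a clever choice of $A$ that degenerates appropriately at the corners. A secondary technical point is that the estimate is only asymptotic ($\limsup$), so one must show the error terms — those involving $\langle A u_j, u_j\rangle$ type quantities and the mass in the annulus — are $o(1)$ relative to $\lambda_j^2$, which follows from the standard $\|\nabla u_j\|_{L^2} = O(\lambda_j)$ bound but must be tracked carefully to keep the constant sharp.
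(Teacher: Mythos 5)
Your outline follows the same basic route as the paper: the dilation field $X=r\partial_r$ centered at $p_0$, the identity $[-h^2\Delta,X]=-2h^2\Delta$, a radial cutoff supported in $D(p_0,d)$, and vanishing of the boundary terms on the faces adjacent to $p_0$. However, two of your steps contain genuine gaps. First, for Neumann (and mixed) conditions, tangency of the radial field to the adjacent faces is \emph{not} enough: the boundary term $\int_{\partial\Omega}\phi\,(h\partial_\nu\, hXu)\,\bar{u}\,dS$ does not vanish for free. The paper's Lemma \ref{L:zero} must show that $\partial_\nu(Xu)$ vanishes on each adjacent face by rewriting $\partial_x,\partial_y$ in terms of $\partial_\tau,\partial_\nu$ and invoking the eigenfunction equation on the face in the form $-h^2\partial_\nu^2u=(1+h^2\partial_\tau^2)u$, so that the second-order contributions cancel; and the companion term $\int_{\partial\Omega}(hXu)(h\partial_\nu\phi)\bar{u}\,dS$ vanishes only because the cutoff is radial about $p_0$, so that $\partial_\nu\phi=0$ on faces through $p_0$. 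Your phrase ``forces the relevant boundary integrand to vanish or to have the correct sign'' skips exactly the computation that makes the boundary-point case work.

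Second, your anticipated ``main obstacle'' --- vertices adjacent to $p_0$ and faces touching $p_0$ only at an endpoint, to be handled by extra cutoffs or by a known non-concentration statement at polygonal vertices --- rests on a misreading of the geometry. Every face adjacent to a boundary point $p_0$ lies on a line through $p_0$, so the radial field \emph{is} tangent to all of them; and since $d$ is the distance to the nearest non-adjacent face, the disc $D(p_0,d)$ contains no vertex other than possibly $p_0$ and meets $\partial\Omega$ only in those adjacent faces. Hence no auxiliary vertex argument is needed (and ``eigenfunctions do not concentrate at vertices'' is not available as an independent quantitative input --- it is essentially what is being proved). The only corner inside the support of the cutoff is $p_0$ itself, and the issue there is not dynamical but regularity: one needs $u\in H^m$ up to the corner (Grisvard's elliptic estimates on polygons) to justify Green's formula, as in Propositions \ref{P:interior} and \ref{P:flat-side}. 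Finally, the constant $\frac{1}{2-\alpha}$ does not emerge from an unspecified optimization of weights: writing $m=\|u\|^2_{L^2(D(p_0,\alpha d))}$, it falls out of the concrete chain in which the commutator identity gives a lower bound $2m$, while the cutoff-commutator side is bounded by $2\sup|\phi'|\cdot d\cdot\int\psi|h\partial_r u|^2\,r\,dr\,d\theta$, and this last integral is converted, via the equation and $\|u\|_{L^2(\Omega)}=1$, into at most $1-m+o(1)$, yielding $2m\leq \frac{2}{1-\alpha}(1-m)+o(1)$ and hence $m\leq\frac{1}{2-\alpha}+o(1)$. Since this numerical constant is the content of the theorem, that derivation (including the sharp bound $|\phi'|\leq\frac{1}{(1-\alpha)d}+\epsilon$ from the explicit cutoff construction) needs to be carried out rather than asserted.
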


\begin{remark}
Put another way, Theorem \ref{T:poly-nc} says that any semiclassical
defect measure associated to the sequence $\{ u_j \}$ cannot be
supported at only a single point, including boundary points.

  \end{remark}

\begin{remark}
The lower bound on the mass outside of a neighbourhood of $p_0$ independent of $\lambda_j$   is very strong.  In general, given any
open, bounded subset  $U \subset \Omega$, Carlemann estimates give an
exponential lower bound
\[
\| u_j \|_{L^2(U)} \geq c e^{-c \lambda_j}
\]
for some $c>0$.  The tradeoff is that for our result $U$ is a large
subset; the complement of a neighbourhood of a single point.  In
particular, our theorem does not rule out the possibility of glancing
modes, which can concentrate very strongly in a neighbourhood of size
$\lambda_j^{-2/3}$ of the boundary due to Airy asymptotics of glancing modes
(see \cite{AS-book,Mel-Gl-I} for a discussion of Airy asymptotics and
glancing, and \cite{CHT-ND} for a discussion of optimality).


  \end{remark}

\begin{remark}
In this paper, we consider only {\it classical} polygons, in the sense
that the boundary is a closed, piecewise affine curve with finitely
many corners.  Each corner makes an interior angle $\theta$ satisfying $0 <
\theta < 2\pi$.

If $p_0 \in \p \Omega$, then $p_0$ is either on the interior of an
open face or it is a corner.  If $p_0$ is a corner making an angle $0
< \theta < \pi$, we call it a {\it convex} corner, and if the angle is
$\pi < \theta < 2 \pi$, we call it a {\it concave} corner.  In all cases,
we prove a {\it quantitative} estimate in terms of distance to the
nearest non-adjacent side.  
  This is stated concretely in
Propositions \ref{P:interior} and \ref{P:flat-side}.

  \end{remark}

\subsection{History}
\label{SS:History}
Non-concentration type estimates have an enormous history for several
reasons.  Research into properties of eigenfunctions is an old subject
- for example Fourier series are eigenfunction expansions.  The
connections to physical phenomena and other areas of mathematics
(acoustics, scattering, tunneling, elliptic equations, 
quantum chaos, number theory, etc.) make qualitative properties of
eigenfunctions an important and continuing area of research.  Many
questions about eigenfunctions are relatively concrete to state,
allowing even undergraduate students to at least understand the
questions.  (And prove new interesting results!  The author has
several students working on related problems.) 

The strongest non-concentration is
equidistribution in phase space.  If the classical billiard flow is
{\it ergodic} (roughly ``chaotic''), then the eigenfunctions are known
to equidistribute in phase space (at least along a density one
subsequence) \cite{Sni,Zel1,CdV}.  That means, except possibly for a
density zero subsequence, the eigenfunctions concentrate equally
everywhere.  This is called {\it quantum ergodicity}.

These possible exceptional subsequences of {\it non}-quantum ergodic
eigenfunctions have also been heavily studied.  In the case of joint
Hecke-Laplace eigenfunctions Lindenstrauss proved there are no
exceptional subsequences, a property called quantum unique ergodicity \cite{Lin-que}.  In the case of the Bunimovich stadium
Hassell proved that as one varies the length of the rectangular part
of the stadium, quantum unique ergodicity fails with probability 1
\cite{Has-nonque}.  In the works \cite{CdVP-I,CdVP-II,Chr-NC,Chr-QMNC}, it is
shown that eigenfunctions cannot concentrate too sharply along an
unstable periodic geodesic.  That gives a restriction on what possible
limit measures exist for sequences of eigenfunctions in this case.

On the other hand, as mentioned above, it is possible for
eigenfunctions to concentrate very sharply in a $\lambda$ dependent
neighbourhood of a hypersurface, or even near a single point.
Further, by measuring $L^p$ norms
instead of $L^2$ norms, even more refined
concentration/non-concentration estimates are known \cite{Sog-sharm,Sog-lp}.
It should be noted that these estimates are sharp on the sphere, which
clearly plays no role in the present paper!

Yet another measure of concentration/non-concentration is to consider
restrictions of eigenfunctions to lower dimensional sets.  Although
this is not the topic of this paper, it is worthwhile to mention a few
results.  Burq-G{\'e}rard-Tzvetkov \cite{BGT-erest} give sharp upper bounds on restrictions of
eigenfunctions and the author's work with Hassell-Toth \cite{CHT-ND}
gives sharp upper bounds on the Neumann data on hypersurfaces.  In the
setting of quantum ergodic eigenfunctions, more is known
\cite{GeLe-qe,HaZe,ToZe-1,Toze-2,CTZ-1,DyZw-QER}, however again these
results require smoothness of the domain and a priori knowledge about
ergodicity of the classical flow.  The novel feature of 
the author's work on boundary
values on triangles and simplices \cite{Chr-tri,Chr-simp} is that it
does not require any dynamical systems knowledge, and only relies on
integrations by parts.  This is the starting point for the present paper.

\subsection*{Acknowledgements}
The author would like to thank Luc Hillairet, Jeremy Marzuola, and
John Toth for very helpful and interesting discussions about the
topics in this paper.  This work is supported in part by NSF grant DMS-1500812.

\section{Preliminaries}
\label{S:preliminaries}
 Here we record some notation and basic facts  needed to complete the
 quantitative estimates in Propositions \ref{P:interior} and
 \ref{P:flat-side} below.

\subsection{Notation}
\label{SS:notation}
In this subsection, we define more precisely what is meant by
polygonal boundary and the boundary operator $B$ used in Theorem \ref{T:poly-nc}.

Let $\Gamma = \cup_{j = 1}^N \Gamma_j \subset \reals^2$ be a union of disjoint closed simple
polygonal curves, and let $\Omega \subset \reals^2$ be the bounded
open domain enclosed by $\Gamma$.  In order to have consistent
normalization for the eigenfunctions, let us assume that $\Omega$ is
connected.  For each $1 \leq j \leq N$,
$\Gamma_j$ is a union of a finite number of linear segments $\Gamma_j
= 
\cup_{k_j = 1}^{M_j} \Gamma_{k_j}$.  On each segment, let $\nu_{k_j}$
denote the outward (with respect to $\Omega$) unit normal vector, and
let $B_{k_j}$ be
a homogeneous boundary operator:
\[
B_{k_j} u = \begin{cases}
  u|_{\Gamma_{k_j}} , \text{ for Dirichlet boundary conditions, or}
  \\
  \p_{\nu_{k_j}} u |_{\Gamma_{k_j}}  \text{ for Neumann boundary
    conditions}.
\end{cases}
\]
Let $B = \sum_{j=1}^N \sum_{k_j = 1}^{M_j} B_{k_j}$ denote the total
boundary operator.

Let us drop the
subscript $j$ notation on $\lambda_j$ and rescale
$h = \lambda^{-1}$ so that the solutions of \eqref{E:ef-1} satisfy the
semiclassical eigenfunction problem
\begin{equation}
  \label{E:ef-2}
\begin{cases}  -h^2 \Delta u = u \text{ in } \Omega, \\
  Bu = 0 \text{ on } \p \Omega, \\
  \| u \|_{L^2 ( \Omega) } = 1.
\end{cases}
\end{equation}
We are then interested in asymptotics as $h \to 0$, and when we write
$u$, we implicitly mean $u = u(h)$ is a sequence of eigenfunctions
depending on $h$.

If $p_0 \in \overline{\Omega}$, then $p_0$ is either in the interior
of $\Omega$, a boundary point on the interior of an edge, a boundary
point at a convex corner, or a boundary point at a concave corner.
These cases are discussed in Sections \ref{SS:interior}-\ref{SS:int-side}.

\subsection{Regularity of eigenfunctions on polygons}
Before continuing, we remark briefly about regularity of
eigenfunctions, which will allow us to perform the necessary
integrations by parts.
The book of Grisvard \cite{Grisvard-book} has a very detailed account
of regularity for elliptic equations on non-smooth domains.  In
\cite{Chr-simp}, the author studied eigenfunctions on simplices, which
are convex and \cite{Grisvard-book} contains the necessary results for
that paper.  In the present paper, in general a polygon is not convex
(and indeed we study eigenfunctions near concave corners), however
even more detailed information is available in \cite{Grisvard-book}
for polygons.  In particular, polygons are Lipschitz domains, and
\cite[Theorem 1.4.4.6]{Grisvard-book} gives continuity of the
derivative from $H^m$ to $H^{m-1}$, as long as $m \neq 1/2$.  Theorem
4.3.1.4 in \cite{Grisvard-book} gives elliptic regularity estimates on
polygons, so that we can conclude that eigenfunctions are in $H^m$ for
each $m \geq 0$.  Then any derivatives of eigenfunctions are in $H^1$,
so \cite[Theorem 1.5.3.1]{Grisvard-book} shows we can integrate by
parts using Green's formula.   We finally observe that, by the same
results, multiplying an eigenfunction by a smooth bounded function
does not decrease its regularity.

\subsection{The radial vector field}
Here we recall some identities involving the radial vector
field $r \p_r$ (in polar coordinates).  The reason for this is two-fold.
First, commuting with the Laplacian reproduces the Laplacian.  The
second is that, fixing a point $p_0 \in \p \Omega$ and translating so
that $p_0 = 0$, the vector field $r \p_r$ is tangential to any
segments emanating from $0$.  This is especially convenient when applied to
functions with Dirichlet boundary conditions $u|_{\p \Omega} = 0$, as
then locally $r \p_r u = 0$ on the boundary since this is a tangential derivative of $0$.
What is not so obvious is that the radial vector field is also
well-behaved when applied to Neumann or mixed boundary conditions.
This is discussed in Lemma \ref{L:zero} in Section \ref{SS:int-side} below.

For completeness, 
let us  state the well known commutator and change of
variable formulae used in this paper.
\begin{lemma}
  \label{L:reproduce}
Let $\reals^n = (r, \theta)$ be polar coordinates with $ 0 \leq r <
\infty$ and $\theta \in \SS^{n-1}$.  The polar Laplacian is
\[
-\Delta = -\p_r^2 - \frac{(n-1)}{r} \p_r - \frac{1}{r^2}
\Delta_{\SS^{n-1}},
\]
where $-\Delta_{\SS^{n-1}}$ is the Laplacian on the unit $n-1$
sphere.  Let $X = r \p_r$.  Then
\[
  [ - \Delta , X] = -2 \Delta.
  \]

  Now let $\reals^n = (x_1, \ldots , x_n )$ be rectangular coordinates
  with $x_j \in \reals$ for each $j$.  The Laplacian in rectangular
  coordinates is $-\Delta = - \p_{x_1}^2 - \ldots - \p_{x_n}^2$, and
  the radial vector field is $X = x_1 \p_{x_1} + \ldots + x_n
  \p_{x_n}$, so that $[-\Delta ,X] = -2 \Delta.$

  \end{lemma}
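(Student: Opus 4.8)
The plan is to prove the rectangular-coordinate identity by a direct computation with the Euler (infinitesimal dilation) vector field, and then to observe that the polar statement is literally the same statement: in polar coordinates $x = r\theta$ with $\theta \in \SS^{n-1}$ one has
\[
 X = r\partial_r = x_1 \partial_{x_1} + \cdots + x_n \partial_{x_n},
\]
because $r\partial_r$ is precisely the generator of the dilation flow $x \mapsto e^t x$. The expression $-\Delta = -\partial_r^2 - \tfrac{n-1}{r}\partial_r - \tfrac{1}{r^2}\Delta_{\SS^{n-1}}$ is the classical formula for the Laplacian in polar coordinates, obtained by the change of variables $x = r\theta$; I would simply cite this (it is in any standard reference) rather than rederive it.

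For the commutator in rectangular coordinates, I would compute on a test function $f$: for each $j$,
\[
 \partial_{x_j}(Xf) = \partial_{x_j}\!\sum_k x_k \partial_{x_k} f = \partial_{x_j} f + X\partial_{x_j} f,
\]
and applying $\partial_{x_j}$ a second time, using the same identity with $f$ replaced by $\partial_{x_j} f$, gives $\partial_{x_j}^2(Xf) = 2\,\partial_{x_j}^2 f + X\partial_{x_j}^2 f$. Summing over $j$ yields $\Delta(Xf) = 2\Delta f + X(\Delta f)$, that is $[\Delta, X] = 2\Delta$, hence $[-\Delta, X] = -2\Delta$, which proves both halves of the lemma.

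For completeness I would also record the direct polar verification: writing $-\Delta = A + B + C$ with $A = -\partial_r^2$, $B = -\tfrac{n-1}{r}\partial_r$, and $C = -\tfrac{1}{r^2}\Delta_{\SS^{n-1}}$, a short computation gives $[A, X] = 2A$, $[B, X] = 2B$, and $[C, X] = 2C$; the last of these uses that $\Delta_{\SS^{n-1}}$ involves only the angular variables and therefore commutes with both $\partial_r$ and with multiplication by functions of $r$, so that the only contribution comes from differentiating the factor $r^{-2}$. Adding the three identities recovers $[-\Delta, X] = -2\Delta$.

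I do not expect any genuine obstacle here: the identity is classical and the computation is two lines. The only thing requiring care is the elementary bookkeeping, i.e. tracking the signs and noting in the polar derivation that powers of $r$ and the spherical Laplacian commute, which is exactly what makes the angular term $C$ transform with the same factor as the radial terms.
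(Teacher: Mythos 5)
Your proposal is correct, and it is essentially the approach the paper intends: the paper dismisses the lemma with ``a standard computation and change of variables,'' and your direct rectangular computation of $[\Delta,X]=2\Delta$ together with the term-by-term polar check $[A,X]=2A$, $[B,X]=2B$, $[C,X]=2C$ is exactly that standard computation, carried out correctly. Nothing further is needed.
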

The proof is a standard computation and change of variables.

The reason for recording these two coordinate versions is that
sometimes it is more convenient to use one or the other.  In this
paper, for interior points, we are integrating over a disc, so using a
vector field which is independent of $\theta$ is reasonable.  As
mentioned above, the radial vector field is tangential to any segment
emanating from $0$, which is convenient for Dirichlet boundary
conditions on a polygon.  But we are also interested in mixed boundary conditions,
for which it seems easier to use rectangular coordinates.

\section{Proof of Theorem \ref{T:poly-nc} for interior points}
\label{SS:interior}
In this Section, we study non-concentration near a point on the
interior of $\Omega$.  It has been known for some time that
eigenfunctions cannot concentrate away from the boundary of a polygon
\cite{HHM,MaRu} (at least for a subsequence of density one), but we provide a proof in the case of a single point as the
estimates are quantitative in this case.  In the notation from Section
\ref{S:preliminaries}, the statement of Theorem \ref{T:poly-nc} is given in the following
Proposition.  
\begin{proposition}
  \label{P:interior}
Let $u$ be a solution of \eqref{E:ef-2}, and let $p_0 \in \Omega$.  Let
$d = \dist (p_0, \p \Omega)>0$.  For each $0 < \alpha <1$, 
\begin{equation}
  \label{E:interior-est}
  \limsup_{h \to 0+} \| u \|^2_{L^2 ( D(p_0, \alpha d  ) ) } \leq
  \frac{1}{2-\alpha}.
  \end{equation}

\end{proposition}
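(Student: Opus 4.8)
The plan is to exploit the commutator identity $[-\Delta, X] = -2\Delta$ from Lemma \ref{L:reproduce}, applied with $X = r\partial_r$ centered at $p_0$ (translate so $p_0 = 0$). Since $-h^2\Delta u = u$, formally $-h^2[\Delta, X]u = 2u$, so pairing with $u$ and integrating over the disc $D = D(0, R)$ (with $R$ to be chosen near $\alpha d$) should produce, after integration by parts, a relation between $\|u\|_{L^2(D)}^2$ and boundary terms on $\partial D = \{r = R\}$ together with gradient terms. The key point is that $D(p_0, d) \subset \Omega$ since $d = \dist(p_0, \partial\Omega)$, so as long as $R < d$ the disc lies in the interior and there are no boundary-of-$\Omega$ contributions; all boundary terms come from the circle $r = R$, where $u$ is perfectly smooth.

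The concrete computation: write $2\int_D |u|^2 = -h^2 \int_D (\Delta X u)\bar u + h^2 \int_D (X\Delta u)\bar u$. Using $-h^2\Delta u = u$ in the second term gives $h^2\int_D (X\Delta u)\bar u = -\int_D (Xu)\bar u$. For the first term, Green's formula moves the Laplacian onto $\bar u$: $-h^2\int_D (\Delta Xu)\bar u = -h^2\int_{\partial D}(\partial_\nu Xu)\bar u\,dS + h^2\int_{\partial D}(Xu)\partial_\nu\bar u\,dS - h^2\int_D (Xu)\overline{\Delta u} = (\text{boundary}) + \int_D (Xu)\bar u$. Wait — I should be careful with which way I integrate by parts; the cleanest route is to pair $2u = -h^2[\Delta,X]u$ with $\bar u$ and integrate by parts twice on the $\Delta$, collecting boundary terms on $r=R$ and using $-h^2\Delta u = u$ wherever a bare $\Delta u$ appears. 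The interior integrals involving $Xu$ should combine, and one is left with $2\int_D|u|^2$ expressed through $\int_D |u|^2$-type terms (giving a net coefficient) plus boundary integrals on $r=R$ of the form $R\int_{r=R}|\partial_r u|^2\,dS$ and similar, which have a favorable sign, plus a term like $\int_{\partial D} \bar u\, r\partial_r u\, dS$.

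The decisive structural fact is that $X = r\partial_r$ equals $R\partial_r$ on $\partial D = \{r=R\}$, and $\partial_\nu = \partial_r$ there, so the boundary terms organize into $R$ times manifestly nonnegative quantities (like $R\int_{r=R}|\partial_r u|^2$) up to one cross term handled by Cauchy--Schwarz, $\left|\int_{r=R}\bar u\, R\partial_r u\right| \le \frac{R}{2}\int_{r=R}|u|^2 + \frac{R}{2}h^2\int_{r=R}|\partial_r u|^2$ with a weight chosen to absorb into the good term (here the $h^2$ factor must be inserted by rescaling so that $h\partial_r u$ is the natural semiclassical derivative). After discarding the nonnegative boundary contributions, one obtains an inequality of the shape $(2-\alpha)\int_{D(0,\alpha d)}|u|^2 \le 1 + o(1)$, using $\|u\|_{L^2(\Omega)} = 1$ to bound the global mass and the fact that $R = \alpha d < d$ keeps us inside. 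Taking $\limsup_{h\to 0}$ and rearranging yields \eqref{E:interior-est}. The main obstacle I anticipate is bookkeeping the boundary terms on $r = R$ with the correct signs and powers of $h$ — in particular making sure the cross term $\int_{r=R}\bar u\,\partial_r u$ is genuinely absorbable rather than of borderline size, which is presumably why the sharp constant comes out as $\frac{1}{2-\alpha}$ and where the restriction $\alpha < 1$ enters; a secondary point is justifying the integrations by parts, but the regularity discussion in Subsection 2.2 (eigenfunctions lie in all $H^m$, Green's formula applies) takes care of that, and on the circle $r=R$ away from $\partial\Omega$ there is no issue at all.
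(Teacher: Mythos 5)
Your overall commutator strategy is the right one, but carrying it out on the \emph{sharp} disc $D(0,R)$ with $R=\alpha d$ fixed has a genuine gap. If you pair $[-h^2\Delta-1,\,r\p_r]u$ with $\bar u$ over $D(0,R)$ and integrate by parts (no $\p\Omega$ terms, as you say), then after using the equation to eliminate $\p_r^2u$ on the circle you get the Rellich-type identity
\[
2\int_{D(0,R)}|u|^2\,dV \;=\; R\int_{r=R}|u|^2\,dS\;+\;R\int_{r=R}|h\p_r u|^2\,dS\;-\;\frac1R\int_{r=R}|h\,r^{-1}\p_\theta u|^2\,R^2dS .
\]
Two problems. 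First, the signs are the opposite of what you claim: the only term with a ``favorable'' sign is the tangential one, while the terms $R\int_{r=R}\big(|u|^2+|h\p_r u|^2\big)\,dS$ are exactly what must be bounded \emph{above}, not discarded (and the cross term $\int_{r=R}\bar u\,\p_r u\,dS$ you plan to absorb by Cauchy--Schwarz in fact cancels identically in this identity). Second, these are restriction integrals of the eigenfunction to a fixed circle: they are not bounded by the global mass $\|u\|_{L^2(\Omega)}^2=1$ uniformly in $h$ (restriction bounds of Burq--G\'erard--Tzvetkov type carry powers of $h^{-1}$), so no inequality of the shape $(2-\alpha)\int_{D(0,\alpha d)}|u|^2\le 1+o(1)$ can be extracted from the identity at a single radius. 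Relatedly, your argument never brings in the mass of the annulus $\{\alpha d<r<d\}$ or uses $\alpha<1$ quantitatively, which is where the constant $\frac1{2-\alpha}$ must come from.

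The paper's proof repairs exactly this point: instead of the indicator of a disc it uses a smooth radial cutoff $\phi$ with $\phi\equiv1$ on $D(0,\alpha d)$, $\supp\phi\subset D(0,d)$, and $|\phi'|\le\frac1{(1-\alpha)d}+\epsilon$ (Lemma \ref{L:phi1}), so the would-be circle terms become the annulus term $\int (r\p_r u)\big(-2h\phi'\,h\p_r\bar u\big)dV+\O(h)$. This is bounded by $2\sup\big(r|\phi'|\big)\int\psi|h\p_ru|^2\le\frac2{1-\alpha}\big(1-\|u\|^2_{L^2(D(0,\alpha d))}\big)+o(1)$, where a further integration by parts against $\psi$ trades $|h\p_ru|^2$ for $|u|^2$ on the annulus up to $\O(h)$; comparing with the commutator lower bound $2\|u\|^2_{L^2(D(0,\alpha d))}$ and writing $m$ for the disc mass forces $2m\le\frac2{1-\alpha}(1-m)+o(1)$, i.e.\ $m\le\frac1{2-\alpha}+o(1)$ (run as a contradiction in the paper). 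You could equivalently rescue your version by averaging your fixed-$R$ identity over $R\in[\alpha d,d]$, which converts the circle integrals into annulus integrals and reproduces the same bookkeeping; but some such cutoff or averaging step is essential, and as written your proposal is missing it.
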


    \begin{figure}
\centering
    {
\begingroup%
  \makeatletter%
  \providecommand\color[2][]{%
    \errmessage{(Inkscape) Color is used for the text in Inkscape, but the package 'color.sty' is not loaded}%
    \renewcommand\color[2][]{}%
  }%
  \providecommand\transparent[1]{%
    \errmessage{(Inkscape) Transparency is used (non-zero) for the text in Inkscape, but the package 'transparent.sty' is not loaded}%
    \renewcommand\transparent[1]{}%
  }%
  \providecommand\rotatebox[2]{#2}%
  \newcommand*\fsize{\dimexpr\f@size pt\relax}%
  \newcommand*\lineheight[1]{\fontsize{\fsize}{#1\fsize}\selectfont}%
  \ifx\svgwidth\undefined%
    \setlength{\unitlength}{322.03591919bp}%
    \ifx\svgscale\undefined%
      \relax%
    \else%
      \setlength{\unitlength}{\unitlength * \real{\svgscale}}%
    \fi%
  \else%
    \setlength{\unitlength}{\svgwidth}%
  \fi%
  \global\let\svgwidth\undefined%
  \global\let\svgscale\undefined%
  \makeatother%
  \begin{picture}(1,0.76492275)%
    \lineheight{1}%
    \setlength\tabcolsep{0pt}%
    \put(0,0){\includegraphics[width=\unitlength,page=1]{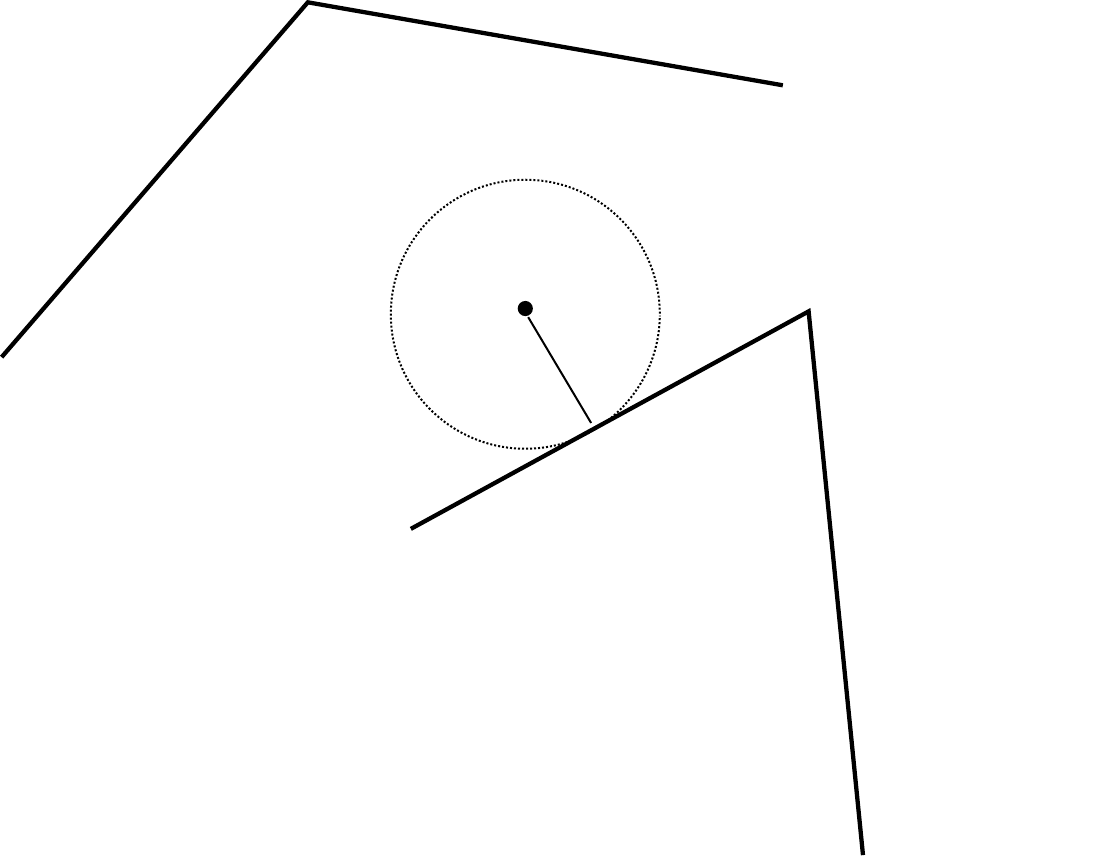}}%
    \put(0.81529498,0.57346606){\color[rgb]{0,0,0}\makebox(0,0)[lt]{\lineheight{1.25}\smash{\begin{tabular}[t]{l}$\p \Omega$\end{tabular}}}}%
    \put(0.50306447,0.49924732){\color[rgb]{0,0,0}\makebox(0,0)[lt]{\lineheight{1.25}\smash{\begin{tabular}[t]{l}$p_0$\end{tabular}}}}%
    \put(0.44676056,0.41223226){\color[rgb]{0,0,0}\makebox(0,0)[lt]{\lineheight{1.25}\smash{\begin{tabular}[t]{l}$d$\end{tabular}}}}%
    \put(0,0){\includegraphics[width=\unitlength,page=2]{poly-int.pdf}}%
  \end{picture}%
\endgroup%
}
\caption{\label{F:int}  The polygon $\Omega$ near $p_0$
 when $p_0$ lies in the interior of $\Omega$.  The distance $d$
  is to the closest point on the boundary.  Here $\p \Omega$
is in bold, and the circle of radius $d $ is in
dashes.  We integrate over a disc of radius $\alpha d$ with $0 <
\alpha < 1$ .}

\end{figure}

\begin{proof}
We argue by contradiction.  Suppose \eqref{E:interior-est} is false.
Then there exists $\alpha>0$ and a subsequence $\{ h_j \}$ as $h \to 0$ such
that
\[
 \| u(h_j) \|^2_{L^2 ( D(p_0, \alpha d  ) ) } \to R >
\frac{1}{2-\alpha}.
\]

Now choose $ \epsilon >0$ satisfying $\epsilon \leq
\min(\frac{d(1 - \alpha)}{4}, \frac{1}{2} )$.  Shrinking $\epsilon>0$ further if
necessary, we may assume 
\[
\lim_{j \to \infty} \| u(h_{j}) \|^2_{L^2 ( D(p_0, \alpha d ) ) } \geq  \frac{1}{2-\alpha} + d \epsilon.
\]
For ease in exposition, let us immediately drop the subscript and
subsequence notation, and just consider a sequence of eigenfunctions
$u$ satisfying
\[
 \| u \|^2_{L^2 ( D(p_0, \alpha d ) ) } \geq  \frac{1}{2-\alpha} + d \epsilon + o(1)
\]
as $h \to 0$.

Translating $\Omega$, we may assume $p_0 = 0$.  We introduce polar
coordinates $(r, \theta)$ near $0$, so that our norm above is over $\{
r \leq \alpha d \}$.  Let $\phi = \phi(r)$ be the function
$\phi_1$ described in Lemma \ref{L:phi1} with $\delta_1 = \alpha d$,
$\delta_2 = d$, and with $\epsilon>0$ specified above, so that
$\phi(r) \equiv 1$ for $r \leq \alpha d + \epsilon^3$,
$\phi(r) \equiv 0$ for $r \geq d - \epsilon^3$.  Lemma
\ref{L:phi1} then guarantees we can choose such a $\phi(r)$ so that
\[
| \phi'| \leq \frac{2}{(1 - \alpha)d} + \epsilon.
\]
Choose also $\psi (r) \in \Ci ( \reals )$ satisfying $\psi \equiv 1$
on $\supp \phi'$, $0 \leq \psi \leq 1$, and $\supp \psi(r) \subset [\alpha d, d ] $.

Now in polar coordinates, $-h^2 \Delta = - h^2 \p_r^2 - h r^{-1} h
\p_r - h^2 r^{-2} \p_\theta^2$, and $[-h^2 \Delta, r \p_r ] = - 2 h^2
\Delta$.  Hence we have
\begin{align}
  \int_\Omega  \phi ([ -h^2 \Delta -1, r \p_r ] u )\bu dV 
  & = -2 \int_\Omega
  \phi (h^2 \Delta u) \bu dV \notag \\
  & = 2 \int_\Omega \phi | u |^2 dV \notag\\
  & \geq 2 \left(  \frac{1}{2-\alpha} + d \epsilon \right) +o(1)\label{E:interior-lower},
\end{align}
since $\phi \equiv 1$ on $D\left(p_0, \alpha d \right)$ and $\phi
\geq 0$ everywhere.  On the
other hand, 
\begin{align*}
  \int_\Omega & \phi ([-h^2 \Delta -1, r \p_r ] u) \bu dV \\
  & = \int_\Omega \phi (( - h^2 \Delta -1) r \p_r u) \bu dV -
  \int_\Omega \phi r (\p_r ( - h^2 \Delta -1)  u )\bu dV \\
& = 
   \int_\Omega \phi ( ( - h^2 \Delta -1) r \p_r u) \bu dV
  \\
  & = \int_\Omega (r \p_r u ) ((-h^2 \Delta -1 ) \phi \bu) dV.
\end{align*}
Here we have used the eigenfunction equation \eqref{E:ef-2} and that
$\phi$ has compact support inside $\Omega$ so there are no boundary terms
when integrating by parts.  The last term also vanishes up to
commuting with $\phi$, so we have
\begin{align*}
  \int_\Omega & \phi ([-h^2 \Delta -1, r \p_r ] u) \bu dV \\
  & = \int_\Omega (r \p_r u ) ([-h^2 \Delta,  \phi] \bu) dV \\
  & = \int_\Omega ( r \p_r u ) ( -2 h \phi' h \p_r \bu ) dV + \O ( h)
  \\
  & \leq 2 \sup | \phi' | \int \psi r | h \p_r u |^2 r dr d \theta + \O(h)
  \\
  & \leq 2 \left( \frac{1}{(1 - \alpha)d } + \epsilon \right) d \int \psi | h \p_r u |^2 r dr d \theta + \O(h).
\end{align*}
To estimate the last term, we use
\begin{align*}
  \int \psi | h \p_r u |^2 r dr d \theta & \leq \int \psi (| h \p_r u
  |^2  + | r^{-1} h \p_\theta u |^2) r dr d \theta \\
  & = \int \psi ( -h^2 \Delta u) \bu dV + \O(h) \\
  & = \int \psi |u|^2 dV + \O(h) \\
  & \leq \left(1 - \frac{1}{2 - \alpha} - d \epsilon \right) + o(1).
\end{align*}
Hence
\begin{align}
    \int_\Omega & \phi ([-h^2 \Delta -1, r \p_r ] u) \bu dV \notag \\
  & \leq 2 \left( \frac{1}{(1 - \alpha) d } + \epsilon \right) d  \int
    \psi | h \p_r u |^2 r dr d \theta + o(1) \notag \\
& \leq  2 d \left( \frac{1}{(1 - \alpha) d} +  \epsilon \right) \left(1 - \frac{1}{2
      - \alpha} - d \epsilon \right) +
    o(1) \notag \\
    & \leq \frac{2}{1-\alpha} + 2d \epsilon - \frac{2}{(1 - \alpha) (
      2 - \alpha)} - \frac{2d \epsilon}{1 - \alpha}  \notag +o(1)\\
    & \leq \frac{2}{1 - \alpha}  - \frac{2}{(1 - \alpha)(2 - \alpha)}
    + o(1) \notag \\
    & \leq \frac{2}{2 - \alpha} + o(1).  \label{E:interior-upper}
\end{align}

Combining \eqref{E:interior-lower} and \eqref{E:interior-upper}, we have
\begin{align*}
  2 \left(  \frac{1}{2 - \alpha} + d \epsilon \right) & \leq \int_\Omega  \phi ([-h^2
    \Delta -1, r \p_r ] u) \bu dV  \\
  & \leq \frac{2}{2 - \alpha} + o(1),
\end{align*}
which is a contradiction for $h >0$ sufficiently small.

  \end{proof}



\section{Proof of Theorem \ref{T:poly-nc} for boundary points}
\label{SS:int-side}

For a boundary point $p_0 \in \p \Omega$, $p_0$ either lies in the
interior of a flat face, or at a corner.  If at a corner, it can
either be a convex or concave corner.  But it turns out that the proof works
more or less the same for all three cases.  In fact, the proof is
nearly identical to the proof of the interior case.  Let $\theta_0$ be  the
angle of $\p \Omega$ at $p_0$, measured from the interior of
$\Omega$.  If $0 < \theta_0 < \pi$, $p_0$ lies at a convex corner, and
if $\pi < \theta_0 < 2 \pi$, $p_0$ lies at a concave corner.  If $\theta_0
= \pi$, then $p_0$ lies on the interior of a face of $\p \Omega$.
The statement of Theorem \ref{T:poly-nc} for boundary points in the
notation of Section \ref{S:preliminaries} is given in the following Proposition.



    \begin{figure}
\centering
    {
\begingroup%
  \makeatletter%
  \providecommand\color[2][]{%
    \errmessage{(Inkscape) Color is used for the text in Inkscape, but the package 'color.sty' is not loaded}%
    \renewcommand\color[2][]{}%
  }%
  \providecommand\transparent[1]{%
    \errmessage{(Inkscape) Transparency is used (non-zero) for the text in Inkscape, but the package 'transparent.sty' is not loaded}%
    \renewcommand\transparent[1]{}%
  }%
  \providecommand\rotatebox[2]{#2}%
  \newcommand*\fsize{\dimexpr\f@size pt\relax}%
  \newcommand*\lineheight[1]{\fontsize{\fsize}{#1\fsize}\selectfont}%
  \ifx\svgwidth\undefined%
    \setlength{\unitlength}{304.63326645bp}%
    \ifx\svgscale\undefined%
      \relax%
    \else%
      \setlength{\unitlength}{\unitlength * \real{\svgscale}}%
    \fi%
  \else%
    \setlength{\unitlength}{\svgwidth}%
  \fi%
  \global\let\svgwidth\undefined%
  \global\let\svgscale\undefined%
  \makeatother%
  \begin{picture}(1,0.75755698)%
    \lineheight{1}%
    \setlength\tabcolsep{0pt}%
    \put(0,0){\includegraphics[width=\unitlength,page=1]{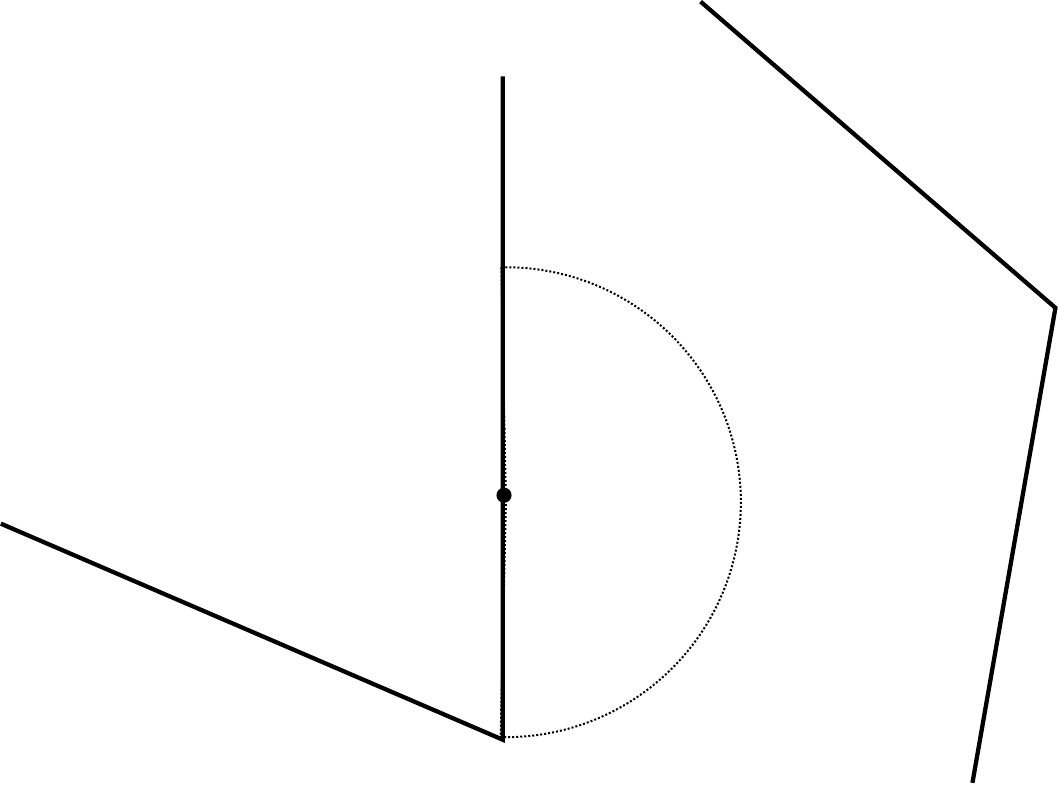}}%
    \put(0.63282266,0.00630294){\color[rgb]{0,0,0}\makebox(0,0)[lt]{\lineheight{1.25}\smash{\begin{tabular}[t]{l}$\p \Omega$\end{tabular}}}}%
    \put(0.35270829,0.28951313){\color[rgb]{0,0,0}\makebox(0,0)[lt]{\lineheight{1.25}\smash{\begin{tabular}[t]{l}$p_0$\end{tabular}}}}%
    \put(0.52095293,0.39416212){\color[rgb]{0,0,0}\makebox(0,0)[lt]{\lineheight{1.25}\smash{\begin{tabular}[t]{l}$d$\end{tabular}}}}%
    \put(0,0){\includegraphics[width=\unitlength,page=2]{poly-flat.pdf}}%
  \end{picture}%
\endgroup%
}
\caption{\label{F:flat}  The polygon $\Omega$ near $p_0 \in \p
  \Omega$ when $p_0$ lies on the interior of a side.  The distance $d$
  is to the closest vertex or closest other side, whichever is
  closer.  Here $\p \Omega$
is in bold, and the circle of radius $d$ is in
dashes. }
    \end{figure}

    \begin{figure}
\centering
    {
\begingroup%
  \makeatletter%
  \providecommand\color[2][]{%
    \errmessage{(Inkscape) Color is used for the text in Inkscape, but the package 'color.sty' is not loaded}%
    \renewcommand\color[2][]{}%
  }%
  \providecommand\transparent[1]{%
    \errmessage{(Inkscape) Transparency is used (non-zero) for the text in Inkscape, but the package 'transparent.sty' is not loaded}%
    \renewcommand\transparent[1]{}%
  }%
  \providecommand\rotatebox[2]{#2}%
  \newcommand*\fsize{\dimexpr\f@size pt\relax}%
  \newcommand*\lineheight[1]{\fontsize{\fsize}{#1\fsize}\selectfont}%
  \ifx\svgwidth\undefined%
    \setlength{\unitlength}{263.77104378bp}%
    \ifx\svgscale\undefined%
      \relax%
    \else%
      \setlength{\unitlength}{\unitlength * \real{\svgscale}}%
    \fi%
  \else%
    \setlength{\unitlength}{\svgwidth}%
  \fi%
  \global\let\svgwidth\undefined%
  \global\let\svgscale\undefined%
  \makeatother%
  \begin{picture}(1,1.12445209)%
    \lineheight{1}%
    \setlength\tabcolsep{0pt}%
    \put(0,0){\includegraphics[width=\unitlength,page=1]{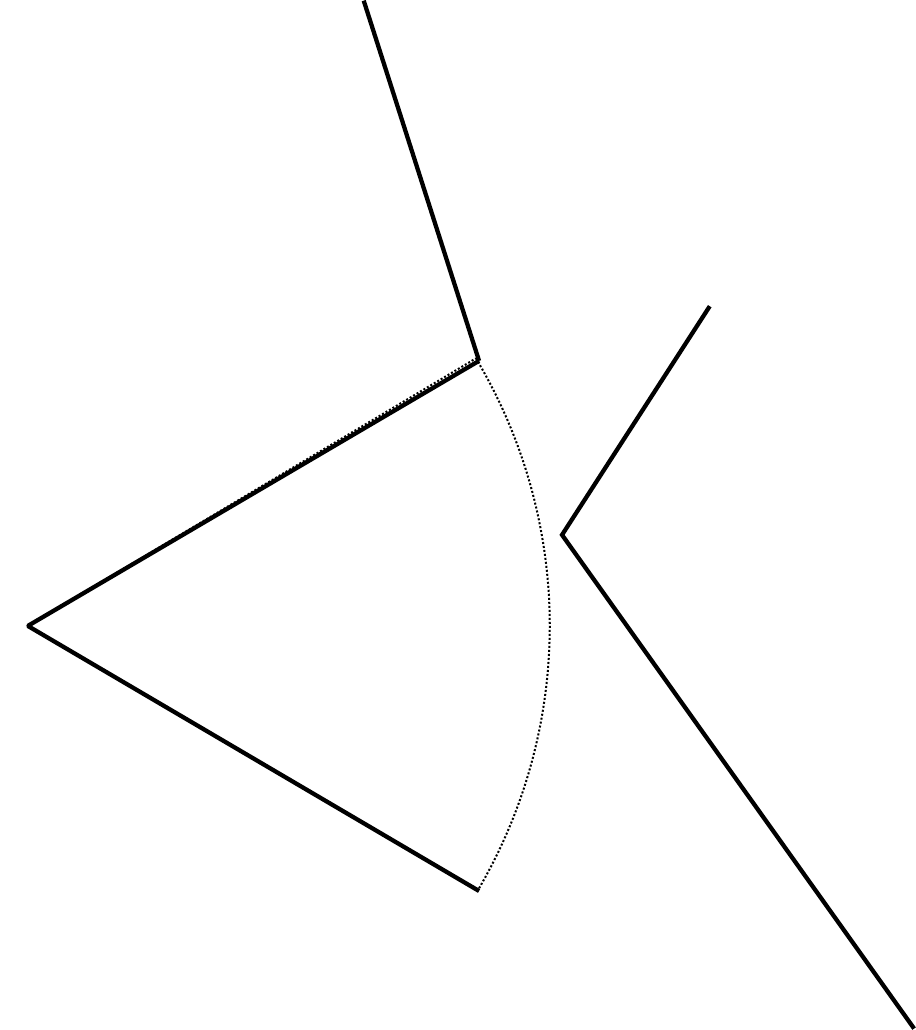}}%
    \put(0.64680121,0.96245241){\color[rgb]{0,0,0}\makebox(0,0)[lt]{\lineheight{1.25}\smash{\begin{tabular}[t]{l}$\p \Omega$\end{tabular}}}}%
    \put(0,0){\includegraphics[width=\unitlength,page=2]{poly-convex.pdf}}%
    \put(0.12758814,0.65313397){\color[rgb]{0,0,0}\makebox(0,0)[lt]{\lineheight{1.25}\smash{\begin{tabular}[t]{l}$d$\end{tabular}}}}%
    \put(0.27120028,0.42556396){\color[rgb]{0,0,0}\makebox(0,0)[lt]{\lineheight{1.25}\smash{\begin{tabular}[t]{l}$S$\end{tabular}}}}%
  \end{picture}%
\endgroup%
}
\caption{\label{F:convex}  The polygon $\Omega$ near $p_0 \in \p
  \Omega$ when $p_0$ lies at a convex corner.  The domain $\Omega$ is
  to the right of $p_0$, so that the interior angle $\theta_0$ satisfies
  $0 < \theta_0 < \pi$.  The distance $d$
  is to the closest vertex or closest other side, whichever is
  closer.  Here $\p \Omega$
is in bold, and the sector $S$ of radius $d$ which is contained in
$\Omega$ is in
dashes. }
    \end{figure}

    \begin{figure}
\centering
    {
\begingroup%
  \makeatletter%
  \providecommand\color[2][]{%
    \errmessage{(Inkscape) Color is used for the text in Inkscape, but the package 'color.sty' is not loaded}%
    \renewcommand\color[2][]{}%
  }%
  \providecommand\transparent[1]{%
    \errmessage{(Inkscape) Transparency is used (non-zero) for the text in Inkscape, but the package 'transparent.sty' is not loaded}%
    \renewcommand\transparent[1]{}%
  }%
  \providecommand\rotatebox[2]{#2}%
  \newcommand*\fsize{\dimexpr\f@size pt\relax}%
  \newcommand*\lineheight[1]{\fontsize{\fsize}{#1\fsize}\selectfont}%
  \ifx\svgwidth\undefined%
    \setlength{\unitlength}{281.73835945bp}%
    \ifx\svgscale\undefined%
      \relax%
    \else%
      \setlength{\unitlength}{\unitlength * \real{\svgscale}}%
    \fi%
  \else%
    \setlength{\unitlength}{\svgwidth}%
  \fi%
  \global\let\svgwidth\undefined%
  \global\let\svgscale\undefined%
  \makeatother%
  \begin{picture}(1,0.95720523)%
    \lineheight{1}%
    \setlength\tabcolsep{0pt}%
    \put(0,0){\includegraphics[width=\unitlength,page=1]{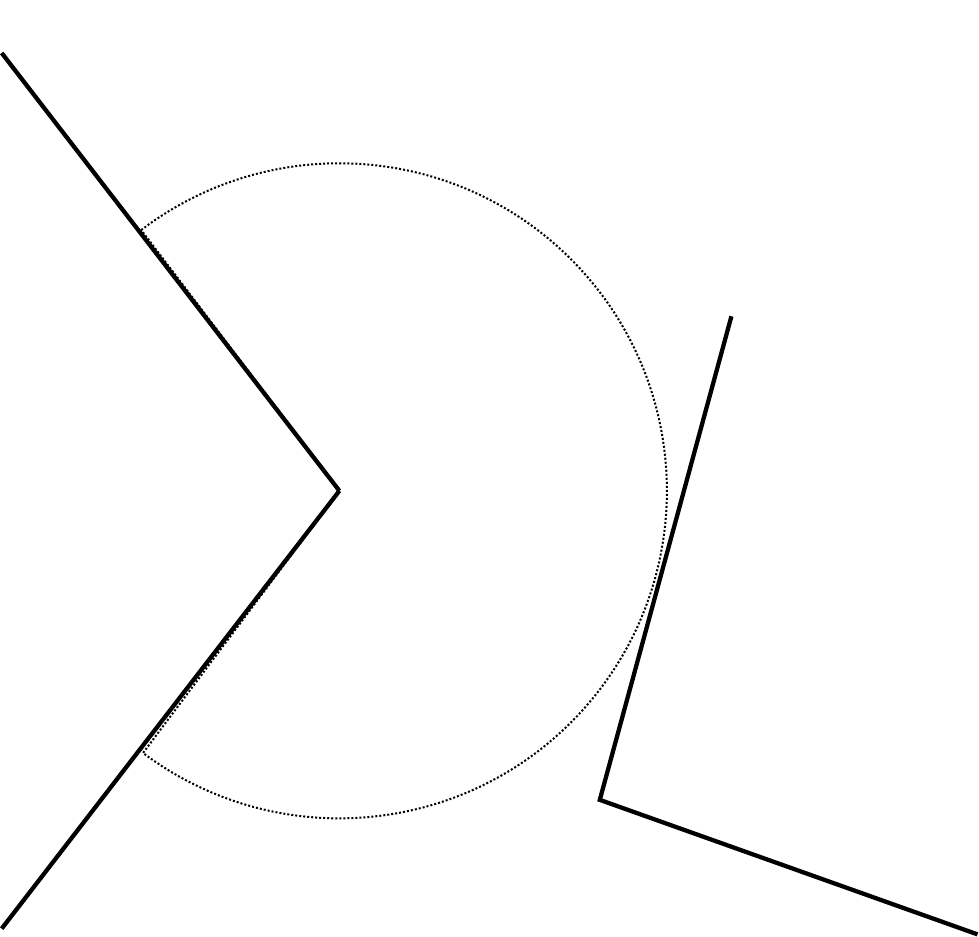}}%
    \put(0.4025007,0.52861785){\color[rgb]{0,0,0}\makebox(0,0)[lt]{\lineheight{1.25}\smash{\begin{tabular}[t]{l}$S$\end{tabular}}}}%
    \put(0.47270845,0.93231256){\color[rgb]{0,0,0}\makebox(0,0)[lt]{\lineheight{1.25}\smash{\begin{tabular}[t]{l}$\p \Omega$\end{tabular}}}}%
    \put(0.45574152,0.34717469){\color[rgb]{0,0,0}\makebox(0,0)[lt]{\lineheight{1.25}\smash{\begin{tabular}[t]{l}$d$\end{tabular}}}}%
    \put(0,0){\includegraphics[width=\unitlength,page=2]{poly-concave.pdf}}%
  \end{picture}%
\endgroup%
}
\caption{\label{F:concave}    The polygon $\Omega$ near $p_0 \in \p
  \Omega$ when $p_0$ lies at a concave corner.  The domain $\Omega$ is
  to the right of boundary segments adjacent to $p_0$, so that the interior angle $\theta_0$ satisfies
  $\pi < \theta_0 < 2 \pi$.  The distance $d$
  is to the closest vertex or closest other side, whichever is
  closer.  Here $\p \Omega$
is in bold, and the sector $S$ of radius $d$ which is contained in
$\Omega$ is in
dashes. }
    \end{figure}

    \begin{proposition}
\label{P:flat-side}
Let $u$ be a solution of \eqref{E:ef-2}, and let $p_0 \in \p \Omega$.
Let $d = \dist(p_0, F')$, where $F'$ is the nearest edge to $p_0$
which is not adjacent to $p_0$.  
Then  for each $0 < \alpha < 1$,  
\begin{equation}
  \label{E:flat-side-est}
  \limsup_{h \to 0+} \| u \|^2_{L^2 ( D(p_0, \alpha d ) \cap \Omega) } \leq
  \frac{1}{2-\alpha}.
  \end{equation}


    \end{proposition}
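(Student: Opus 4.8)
The plan is to reproduce the contradiction argument from the proof of Proposition~\ref{P:interior} almost verbatim; the only genuinely new point is the treatment of the boundary terms that Green's formula now produces on the faces of $\p\Omega$ meeting $p_0$. First translate so that $p_0 = 0$ and introduce polar coordinates $(r,\theta)$, so that $S := D(0,d)\cap\Omega$ is a sector of opening angle $\theta_0$ (a half-disc when $\theta_0=\pi$). The geometric fact underlying everything is encoded in the definition of $d$: inside the disc $D(0,d)$ the set $\p\Omega$ consists precisely of sub-segments of the one or two faces of $\p\Omega$ adjacent to $p_0$. Indeed any face not containing $p_0$ lies at distance $\geq d$ from $0$ by hypothesis; and if an adjacent face terminated at a vertex $V$ with $|V|<d$, then the next face -- which does not contain $p_0$ -- would lie at distance $\leq|V|<d$, a contradiction. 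Since these adjacent faces are line segments through the origin, the radial field $X=r\p_r$ is tangent to them, and any radial cutoff $\phi=\phi(r)$ satisfies $\p_\nu\phi=\phi'(r)\,\p_\nu r=0$ on them, because $\p_\nu r=0$ along a line through the origin.

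Now suppose \eqref{E:flat-side-est} fails. Exactly as in the interior case, after passing to a subsequence we may assume $\|u\|^2_{L^2(D(0,\alpha d)\cap\Omega)}\geq\frac{1}{2-\alpha}+d\epsilon+o(1)$ for $\epsilon>0$ small enough, and we take the same cutoffs $\phi=\phi(r)$ from Lemma~\ref{L:phi1} (with $\phi\equiv1$ on $D(0,\alpha d)$, $\supp\phi\subset D(0,d)$, $|\phi'|\leq\frac{2}{(1-\alpha)d}+\epsilon$) and $\psi=\psi(r)$ with $\psi\equiv1$ on $\supp\phi'$, $0\leq\psi\leq1$, and $\supp\psi\subset[\alpha d,d]$. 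Using $[-h^2\Delta-1,X]=-2h^2\Delta$ from Lemma~\ref{L:reproduce} together with the eigenfunction equation, the lower bound
\[
  \int_\Omega\phi\,([-h^2\Delta-1,X]u)\,\bu\,dV
  = 2\int_\Omega\phi\,|u|^2\,dV
  \geq 2\left(\frac{1}{2-\alpha}+d\epsilon\right)+o(1)
\]
is unchanged. For the matching upper bound, integrate by parts to move $-h^2\Delta-1$ from $Xu$ onto $\phi\bu$; the new contribution relative to the interior case is the boundary term
\[
  -h^2\int_{\p\Omega}\left(\phi\,\bu\,\p_\nu(Xu)-(Xu)\,\p_\nu(\phi\,\bu)\right)\,dS,
\]
whose integrand is supported on the faces adjacent to $p_0$. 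On a Dirichlet face $\bu=0$ and $Xu=0$ (a tangential derivative of $0$); on a Neumann face $\p_\nu\bu=0$, $\p_\nu\phi=0$, and $\p_\nu(Xu)=0$ by Lemma~\ref{L:zero} -- which is precisely the statement that $Xu$ again satisfies the boundary condition of $u$ -- and the same is true for a mixed configuration; so this boundary term vanishes identically. With it gone, the remaining steps are verbatim those of the interior case: rewrite $\int_\Omega\phi([-h^2\Delta-1,X]u)\bu\,dV$ as $\int_\Omega(Xu)(-2h\phi'\,h\p_r\bu)\,dV+\O(h)$, apply Cauchy--Schwarz, use $r\leq d$ on $\supp\phi'$, and bound $\int\psi|h\p_r u|^2\,dV$ above by $\int\psi|h\nabla u|^2\,dV=\int\psi(-h^2\Delta u)\bu\,dV+\O(h)=\int\psi|u|^2\,dV+\O(h)\leq1-\frac{1}{2-\alpha}-d\epsilon+o(1)$, the last integration by parts again having no boundary contribution since $\bu$ (resp.\ $\p_\nu u$) vanishes on the Dirichlet (resp.\ Neumann) faces. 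These steps give $\int_\Omega\phi([-h^2\Delta-1,X]u)\bu\,dV\leq\frac{2}{2-\alpha}+o(1)$, contradicting the lower bound for $h$ small enough.

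The only obstacle not already present in the interior case is this vanishing of the boundary terms. For Dirichlet conditions it is immediate, since $X$ is tangential to the faces and $u,\bu$ vanish there. For Neumann or mixed conditions it rests on the non-obvious fact recorded in Lemma~\ref{L:zero}, that $r\p_r u$ again satisfies the Neumann condition on a face through the origin, together with the elementary identity $\p_\nu\phi=0$ for a radial cutoff $\phi(r)$. Once these are in hand, the convex-corner, flat-face, and concave-corner cases are dispatched identically, since polar coordinates and the tangency of $r\p_r$ to the adjacent segments are insensitive to the size of $\theta_0$; one could equally work in the rectangular coordinates of Lemma~\ref{L:reproduce}, with $X=x_1\p_{x_1}+x_2\p_{x_2}$, which is sometimes more convenient when the two adjacent faces carry different boundary conditions.
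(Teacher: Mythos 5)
Your argument is correct and follows the paper's proof essentially verbatim: the same contradiction scheme as Proposition \ref{P:interior}, with the boundary terms from Green's formula killed by the tangency of $X=r\p_r$ to the adjacent faces and the radiality of $\phi$, which is exactly the content of Lemma \ref{L:zero}. One small caveat: the statement of Lemma \ref{L:zero} is the integrated identity $2\int_\Omega\phi|u|^2\,dV=\int_\Omega(Xu)([-h^2\Delta,\phi]\bu)\,dV$ rather than the pointwise fact that $\p_\nu(Xu)=0$ on a Neumann face (that fact is proved inside the Lemma's proof and uses the eigenfunction equation, not the boundary condition alone), but either reading supplies precisely what your argument needs.
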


Before jumping into the proof, we need some knowledge of how mixed
boundary conditions interact with the radial vector field.

\begin{lemma}
\label{L:zero}

Let $p_0 \in F$ for some face $F$, and let $d = \dist(p_0,
F')$, where $F'$ is the  closest non-adjacent face to $p_0$.  Let $u$ be a solution to
\eqref{E:ef-2} subject to the boundary conditions $Bu = 0$.  Then for any $0 < R < d$ and
any radial $\phi \in \Ci_c (D(p_0, R))$,
\[
2 \int_\Omega \phi |u|^2 dV
 = \int_{\Omega} (Xu) (
    [-h^2 \Delta, \phi] \bu) dV .
    \]

  \end{lemma}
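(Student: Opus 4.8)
The plan is to run the same commutator computation as in the proof of Proposition~\ref{P:interior}, the only new issue being the boundary terms produced when one integrates by parts on the faces through $p_0$. Translate so that $p_0 = 0$, and let $X = r\p_r$ (equivalently, in rectangular coordinates, $X = x_1\p_{x_1} + x_2\p_{x_2}$). The hypothesis $R < d = \dist(p_0,F')$ guarantees that $\supp\phi$ meets $\p\Omega$ only along the faces adjacent to $p_0$, and within $D(p_0,R)$ each such face is a line segment emanating from $0$. Two elementary facts about these segments will be used repeatedly: $X$ is tangent to each of them, and the outward unit normal $\nu$ satisfies $\hat r\cdot\nu = 0$ on them, so that $\p_\nu\phi = \phi'(r)\,\hat r\cdot\nu = 0$ on $\p\Omega\cap\supp\phi$.

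First I would record the pointwise identity: by Lemma~\ref{L:reproduce}, $[-h^2\Delta - 1, X] = [-h^2\Delta, X] = -2h^2\Delta$, and since $-h^2\Delta u = u$ we get, with no integration by parts,
\[
\int_\Omega \phi\,\big([-h^2\Delta - 1, X]u\big)\,\bu\, dV = -2\int_\Omega \phi\,(h^2\Delta u)\,\bu\, dV = 2\int_\Omega \phi\,|u|^2\, dV .
\]
On the other hand, expanding the commutator and using $(-h^2\Delta - 1)u = 0$ to kill the term $\phi\,X\big((-h^2\Delta - 1)u\big)$,
\[
\int_\Omega \phi\,\big([-h^2\Delta - 1, X]u\big)\,\bu\, dV = \int_\Omega \big((-h^2\Delta - 1)(Xu)\big)\,(\phi\bu)\, dV .
\]
Now I would apply Green's formula to move $-h^2\Delta$ onto $\phi\bu$ — legitimate because $Xu$ and $\phi\bu$ lie in $H^1$ by the regularity results quoted in Section~\ref{S:preliminaries}, so that \cite[Theorem~1.5.3.1]{Grisvard-book} applies — and use that $\bu$ is also an eigenfunction, so $(-h^2\Delta - 1)(\phi\bu) = [-h^2\Delta, \phi]\bu$. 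This gives
\[
\int_\Omega \phi\,\big([-h^2\Delta - 1, X]u\big)\,\bu\, dV = \int_\Omega (Xu)\,\big([-h^2\Delta, \phi]\bu\big)\, dV - h^2\!\int_{\p\Omega}\!\Big[\big(\p_\nu(Xu)\big)(\phi\bu) - (Xu)\,\p_\nu(\phi\bu)\Big]\, dS .
\]
Comparing with the first identity, the lemma follows once the boundary integral is shown to vanish.

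The boundary integral runs only over the faces adjacent to $p_0$, and I would treat each face according to its boundary condition. On a Dirichlet face, $u = 0$ there, so $\phi\bu = 0$; moreover $X$ is tangent to the face, so $(Xu)|_{\text{face}}$ is the tangential derivative of $u|_{\p\Omega} = 0$, hence $Xu = 0$ there. Both products in the integrand vanish. On a Neumann face, $\p_\nu u = 0$, so $\p_\nu(\phi\bu) = (\p_\nu\phi)\bu + \phi\,\p_\nu\bu = 0$ using $\p_\nu\phi = 0$ from above; and for the other term I would use rectangular coordinates in which the face lies along $\{x_2 = 0\}$ with $\Omega = \{x_2 > 0\}$ locally, so $\p_\nu = -\p_{x_2}$ and $X = x_1\p_{x_1} + x_2\p_{x_2}$. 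Then
\[
\p_\nu(Xu)\big|_{x_2 = 0} = -\big(\p_{x_2}u + x_1\,\p_{x_1}\p_{x_2}u + x_2\,\p_{x_2}^2 u\big)\big|_{x_2 = 0} = -\big(\p_{x_2}u + x_1\,\p_{x_1}\p_{x_2}u\big)\big|_{x_2 = 0} = 0 ,
\]
since $\p_{x_2}u$ vanishes identically along $\{x_2 = 0\}$ (this is the Neumann condition), hence so does its tangential derivative $\p_{x_1}\p_{x_2}u$. Mixed boundary conditions are handled face by face. Thus the boundary integral vanishes, and the two displayed identities for $\int_\Omega \phi\,([-h^2\Delta - 1, X]u)\,\bu\, dV$ give the claim.

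The crux of the argument is the vanishing of $\p_\nu(Xu)$ on a Neumann face: this is the one place where one needs not merely the pointwise Neumann condition but the fact that $\p_\nu u$ vanishes along the whole face (so that its tangential derivative vanishes too), and it is precisely the reason it is cleaner to work in rectangular rather than polar coordinates for mixed conditions. A secondary point, already dispatched by the regularity discussion, is having enough Sobolev regularity to apply Green's formula in a neighbourhood of $p_0$ even when $p_0$ is a concave corner.
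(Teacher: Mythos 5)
Your proposal is correct and follows essentially the same route as the paper: the commutator identity $[-h^2\Delta,X]=-2h^2\Delta$, Green's formula, and face-by-face vanishing of the boundary terms, using that the adjacent faces lie on lines through $p_0$ so that $X$ is tangent to them and $\p_\nu \phi = 0$ for radial $\phi$. The only real difference is cosmetic: by rotating coordinates so the face under consideration lies in $\{x_2=0\}$ you obtain $\p_\nu(Xu)=0$ on a Neumann face directly from the Neumann condition and its tangential derivative, whereas the paper, working in bisector-symmetric coordinates at a corner, reaches the same cancellation via a tangential/normal decomposition (with an appeal to the eigenfunction equation whose contribution cancels anyway).
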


\begin{proof}

For a boundary point $p_0 \in \p \Omega$, $p_0$ either lies in the
interior of a flat face, or at a corner.  If at a corner, it can
either be a convex or concave corner.  But it turns out that the proof works
more or less the same for all three cases.  In fact, the proof is
nearly identical to the proof of the interior case.  Let $\theta_0$ be  the
angle of $\p \Omega$ at $p_0$, measured from the interior of
$\Omega$.  If $0 < \theta_0 < \pi$, $p_0$ lies at a convex corner, and
if $\pi < \theta_0 < 2 \pi$, $p_0$ lies at a concave corner.  If $\theta_0
= \pi$, then $p_0$ lies on the interior of a face of $\p \Omega$.

Translate so that $p_0 = 0$ and rotate so that locally near $0$
\[
\Omega = \{ (r, \theta) : 0 \leq r \leq R, \,\, -\theta_0/2 \leq
\theta \leq \theta_0/2 \}.
\]
This just means that locally $\Omega$ looks like a sector of a disc,
with $\Omega$ on the right hand side.

The proof procedes by considering the different boundary conditions
when $\theta_0 = \pi$ and when $\theta_0 \neq \pi$.  
The first computation uses Lemma \ref{L:reproduce}:
\[
\int_\Omega \phi ([-h^2 \Delta -1, X] u ) \bu dV = -2 \int_\Omega \phi
(-h^2 \Delta ) \bu dV = 2 \int_\Omega \phi |u|^2 dV.
\]
On the other hand, unpacking the commutator gives
\[
\int_\Omega \phi ((-h^2 \Delta -1) X u - X ( -h^2 \Delta -1) u ) \bu
dV = \int_\Omega \phi ((-h^2 \Delta -1) X u) \bu dV
\]
since $u$ satisfies \eqref{E:ef-2}.  Using Green's formula gives
\begin{align*}
  \int_\Omega & \phi ((-h^2 \Delta -1) X u) \bu dV \\
  & = \int_\Omega (Xu)
((-h^2 \Delta -1) \phi \bu )dV - \int_{\p \Omega} \phi (h \p_\nu hX u) \bu
  dS + \int_{\p\Omega} (hX u ) (h \p_\nu \phi \bu ) dS \\
  &= \int_\Omega (Xu)
([-h^2 \Delta  ,\phi] \bu )dV - \int_{\p \Omega} \phi (h \p_\nu hX u) \bu
  dS + \int_{\p\Omega} (hX u ) (h \p_\nu \phi \bu ) dS.
\end{align*}
Here the boundary integrals over $\p \Omega$ are really just over the
segments within the support of $\phi$.

We first consider
the case $\theta_0 = \pi$.
In this case, $u$ lives in a flat face, so has the same boundary
condition on the whole segment $F: = \{ x = 0 , \,\, -R \leq y \leq R \}$
in rectangular coordinates.  The outward unit normal derivative on $F$
is $- \p_x$.  If $u|_F = 0$, then
\[
 \int_{\p \Omega} \phi (h \p_\nu hX u) \bu
 dS = 0.
 \]
 If $u|_F = 0$, then $\p_y u |_F = 0$ as well, since it is a
 tangential derivative.  Writing
$X = x \p_x + y \p_y$, we have
\[
hX u|_{F} = (x \p_x u + y \p_y u )|_F = x \p_x u|_F = 0,
\]
since $F \subset \{ x = 0 \}$.  Hence
\[
\int_{\p\Omega} (hX u ) (h \p_\nu \phi \bu ) dS = 0,
\]
which proves the Lemma in this  case.

If $ \p_\nu u |_F = 0$, then
\[
\int_{\p\Omega} (hX u ) (h \p_\nu \phi \bu ) dS = 0.
\]
This is true since $\phi$ is assumed radial, so that $\p_\theta \phi =
0$, and along $F$ we have $\p_\nu = \p_\theta = - \p_x$.  
Since $\p_\nu = - \p_x$, that also means that $\p_x u|_F = 0$.  We now
compute:
\begin{align*}
  h\p_\nu hX u|_F & = -h \p_x ( x h \p_x u + y h \p_y u )|_F \\
  & = -h h \p_x u|_F - x h^2 \p_x^2 u |_F - y h \p_y h \p_x u |_F \\
  & = 0,
\end{align*}
since $\p_x u = 0$, $x = 0$, and $\p_y$ is purely tangential.

We now consider the case when $0 < \theta_0 < \pi$, or $\pi < \theta_0
< 2 \pi$
so that $p_0$ lies
at a convex corner, or concave corner respectively.   The upper (respectively lower) segments meeting
at $p_0$ can be parametrized by 
 $y = ax/b$ (respectively $y = -ax/b$).  In the case $0 <
\theta_0 < \pi$, $a, b>0$, while in the case $\pi < \theta_0 < 2 \pi$,
$a>0$ and $b<0$.  The reason for taking $b<0$ instead of $a<0$ here is
so that the same notation may be used for the normal derivatives in
both the convex and concave cases.  Let $F_1$ denote
the upper segment and $F_2$ denote the lower segment.  
There are three cases to
consider, corresponding to the three possible boundary conditions
(Dirichlet-Dirichlet, Neumann-Neumann, and Dirichlet-Neumann).  Of
course the Neumann-Dirichlet case is then obtained by reflection over
$\{ y = 0 \}$.  Our first task is to determine the correct outward
pointing normal derivatives, and a choice of tangent derivatives.  
Along $F_1$, a choice of tangent derivative is
\[
\p_\tau = \frac{b}{c} \p_x + \frac{a}{c} \p_y,
\]
and the outward normal derivative is
\[
\p_\nu = -\frac{a}{c} \p_x + \frac{b}{c} \p_y.
\]
Here we pause briefly to note that this is where it is convenient to
take $a>0$, $b<0$ in the concave case so that the normal derivative
points outward.  
Along $F_2$, we have
\[
\p_\tau = \frac{b}{c} \p_x - \frac{a}{c} \p_y,
\]
and
\[
\p_\nu = - \frac{a}{c} \p_x - \frac{b}{c} \p_y.
\]

In the case where $u |_{F_j} = 0$ for $j = 1,2$, we could use that $r
\p_r$ is tangential, but it is instructive to recall the argument from
\cite{Chr-tri} in rectangular coordinates.  If $u|_{F_1} = 0$, then
\[
0 = \p_\tau u|_{F_1} = \frac{b}{c} \p_x u|_{F_1} + \frac{a}{c} \p_y u
|_{F_1} ,
\]
so that
\[
\p_x u|_{F_1} = -\frac{a}{b} \p_y u |_{F_1}.
\]
That means that
\begin{align*}
\p_\nu u|_{F_1} & = \left(  \frac{a^2}{bc}  + \frac{b}{c}
\right) \p_y u|_{F_1}  \\
& =
\frac{c}{b} \p_y u|_{F_1}
,
\end{align*}
so that
\[
\p_yu|_{F_1} = \frac{b}{c} \p_\nu u|_{F_1},
\]
and
\[
\p_x u |_{F_1} = -\frac{a}{c} \p_\nu u|_{F_1}.
\]
Plugging into the vector field $X$, we have
\begin{align*}
  hX u|_{F_1} & = x h \p_x u|_{F_1} + y h \p_y u|_{F_1} \\
  & = -\frac{a}{c} x h\p_\nu u |_{F_1} + \left(\frac{ax}{b} \right) \frac{b}{c} h \p_\nu
  u|_{F_1}\\
  & = 0.
\end{align*} 
If $u|_{F_2} = 0$, a similar computation holds on $F_2$, so that
\[
- \int_{\p \Omega} \phi (h \p_\nu hX u) \bu
dS + \int_{\p\Omega} (hX u ) (h \p_\nu \phi \bu ) dS = 0,
\]
and the Lemma is proved for that case.

For the next case, let us assume $\p_\nu u|_{F_1} = 0$.  Then
\begin{align*}
  \int_{\p\Omega}  (hX u ) (h \p_\nu \phi \bu ) dS &  = \int_{\p \Omega} (hX u) (h \p_\nu \phi) \bu dS \\
  & = 0
\end{align*}
again since $\phi$ is assumed radial.  On the other hand, for the other boundary integral
term, we again compute:
\begin{align}
  h \p_\nu h X u|_{F_1} &= \left( - \frac{a}{c} h\p_x + \frac{b}{c} h\p_y
  \right) (x h \p_x u|_{F_1} + y h \p_y u|_{F_1} ) \notag \\
  & = -h \frac{a}{c} h \p_x u|_{F_1} + h \frac{b}{c} h \p_y u|_{F_1}
  \notag \\
  & \quad + x h \p_\nu h \p_x u|_{F_1} + 
  \left(\frac{ax}{b}\right) h \p_\nu h \p_y u|_{F_1}. \label{E:F1-Neumann}
\end{align}
Here we have simply computed the contribution when $h \p_\nu$ falls on
$x$ or on $y$ in $hX$.
Next, we  write $h \p_xu$
and $h \p_y u$ in terms of tangential and normal derivatives.  We have
\[
\p_x = \frac{b}{c} \p_\tau - \frac{a}{c} \p_\nu,
\]
and
\[
\p_y = \frac{a}{c} \p_\tau + \frac{b}{c} \p_\nu.
\]
Then
\begin{align*}
h \p_\nu h \p_x u|_{F_1} & = \frac{b}{c} h \p_\nu h \p_\tau u|_{F_1} -
\frac{a}{c} h^2 \p_\nu^2 u|_{F_1} \\
& = \frac{a}{c} ( 1 + h^2 \p_\tau^2  )u|_{F_1},
\end{align*}
since our  change of variables $(x,y) \to (\tau , \nu)$ then gives
\[
-h^2 \p_\nu^2 u - h^2 \p_\tau^2 u = u
\]
in a neighbourhood of $F_1$ and $\p_\tau \p_\nu u|_{F_1} = 0$.  Similarly,
\begin{align*}
h \p_\nu h \p_y u|_{F_1} & = \frac{a}{c} h \p_\nu h \p_\tau u|_{F_1} +
\frac{b}{c} h^2 \p_\nu^2 u|_{F_1} \\
& = -\frac{b}{c} (1 + h^2 \p_\tau^2 ) u|_{F_1}.
\end{align*}
Plugging in to \eqref{E:F1-Neumann}, we have
\begin{align*}
  h \p_\nu h X u|_{F_1} & =
 -h \frac{a}{c} h \p_x u|_{F_1} + h \frac{b}{c} h \p_y u|_{F_1}
 \\
  & \quad  x \frac{a}{c} ( 1 + h^2 \p_\tau^2  )u|_{F_1} 
 -\frac{b}{c}
 \left(\frac{ax}{b} \right) (1 + h^2 \p_\tau^2 ) u|_{F_1} \\
 & =  -h \frac{a}{c} h \p_x u|_{F_1} + h \frac{b}{c} h \p_y u|_{F_1}.
\end{align*}
For the two remaining ``lower order'' terms, it is tempting to
integrate and try to apply known restriction estimates.  However, the
lack of smoothness of the boundary makes this very delicate.  In any
case, we can still write these terms in terms of the normal and
tangent derivatives, recalling that we are assuming that $\p_\nu u =
0$.  That means that
\begin{align*}
h \p_x u|_{F_1} &  = \left( \frac{b}{c}h \p_\tau - \frac{a}{c} h\p_\nu
\right) u|_{F_1} \\
& = \frac{b}{c}h \p_\tau u|_{F_1}.
\end{align*}
Similarly,
\[
h \p_y u|_{F_1} = \frac{a}{c} h \p_\tau u|_{F_1}.
\]
Then 
\begin{align*}
 -h \frac{a}{c} h \p_x u|_{F_1} + h \frac{b}{c} h \p_y u|_{F_1} & = h
 \left( -\frac{a}{c} \left( \frac{b}{c}h \p_\tau u|_{F_1} \right) +
 \frac{b}{c} \left( \frac{a}{c} h \p_\tau u|_{F_1} \right) \right) \\
 & = 0.
 \end{align*}
Again, a similar computation holds on $F_2$, so the Lemma is proved
for the case where $\p_\nu u = 0$ on both $F_1$ and $F_2$.  Of course
these analyses are completely independent of each other, so work for
mixed boundary conditions as well.  This proves the Lemma for $p_0$ at
a 
corner, completing the proof.

\end{proof}

We now continue with the proof of the Proposition.

    \begin{proof}[Proof of Proposition \ref{P:flat-side}]
      The proof is by contradiction, and proceeds almost verbatim from
      the interior case.  We only point out the differences here.

Let $d = \dist(p_0, F')$, where $F'$ is the nearest edge to $p_0$
which is not adjacent to $p_0$ as in the Proposition.  This is either the nearest corner to $p_0$, or a
different side if it is closer than the nearest corner.  By rotating
and translating $\Omega$ we may assume that $p_0 = 0$.  We may also
assume that $\Omega$ is locally symmetric over the line $y = 0$, and
locally the segment $\{ x >0, \, y = 0 \} \subset \Omega$.  In the case $p_0$ is in the
interior of a face, then the face is vertical and $\Omega$ locally
lies to the right of that face.  If $p_0$ lies at a convex corner
making interior angle $\theta_0$, then $\Omega$ lies locally to the
right  of $p_0$ and locally the angles above and below $\{ y = 0 \}$
are $\theta_0/2$.  If $p_0$ lies at a concave corner, we use the same
setup with angle $\theta_0/2$ above and below the line $\{ y = 0 \}$,
but now $\Omega$ lies to the right of the segments at angle $\pm
\theta_0/2$.  See Figures \ref{F:flat}-\ref{F:concave}.

Let $d, \epsilon, \phi, \psi$ all be exactly the same as in the proof
of Proposition \ref{P:interior}.
Every computation is exactly the
same as in the proof for Proposition \ref{P:interior}, except for the
integrations by parts, since now there are 
potentially boundary terms.  We now apply Lemma \ref{L:zero} to compute
\begin{align*}
  2\int_\Omega  \phi |u|^2 dV 
  & =   \int_\Omega  \phi ([ -h^2 \Delta -1, r \p_r ] u )\bu dV\\
  & = \int_\Omega  (Xu) ([ -h^2 \Delta , \phi ] \bu ) dV.
\end{align*}
The rest of the proof follows precisely the proof of Proposition \ref{P:interior}.

\end{proof}


\appendix

\section{Construction of the function $\phi_1$}
In the proofs of Propositions \ref{P:interior} and \ref{P:flat-side}, we have used a cutoff function with nice properties.
The construction of such a function is more or less well-known, but we
want very precise estimates, so we discuss the existence of such a cutoff in the next Lemma.

\begin{lemma}
  \label{L:phi1}
Fix $0 < \delta_1 < \delta_2$ and $0 < \epsilon \leq
\min((\delta_2-\delta_1)/4, 1/2)$.  There exists a function $\phi_1
\in \Ci( \reals)$ satisfying the
following conditions:
\begin{enumerate}
  \item $\phi_1 \geq 0$   and $\phi' \leq 0$, 
\item $\phi_1(s) \equiv 1$ for $s \leq \delta_1 + \epsilon^3$,
\item $\phi_1 (s) \equiv 0$ for $s \geq  \delta_2 - \epsilon^3$, and
  \item $| \phi_1' | \leq \frac{1}{\delta_2-\delta_1} + \epsilon$.

  \end{enumerate}

\end{lemma}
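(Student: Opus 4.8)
The plan is to produce $\phi_1$ by convolving an explicit piecewise-linear ramp with a fixed mollifier, choosing every parameter as an explicit function of $\delta_1,\delta_2,\epsilon$. Throughout write $L=\delta_2-\delta_1>0$. First I would introduce the continuous, piecewise-linear $g\colon\reals\to[0,1]$ which equals $1$ on $(-\infty,\delta_1+2\epsilon^3]$, equals $0$ on $[\delta_2-2\epsilon^3,\infty)$, and is affine of slope exactly $-1/(L-4\epsilon^3)$ in between. The hypotheses $\epsilon\leq L/4$ and $\epsilon\leq 1/2$ give $4\epsilon^3\leq\epsilon\leq L/4<L$, so the transition interval $[\delta_1+2\epsilon^3,\delta_2-2\epsilon^3]$ is nonempty and $g$ is well defined; by construction $g$ is non-increasing, takes values in $[0,1]$, and satisfies $g'\leq 0$ a.e.\ with $|g'|\leq 1/(L-4\epsilon^3)$ a.e.

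Next I would mollify. Fix $\rho\in\Ci(\reals)$ with $\rho\geq 0$, $\supp\rho\subset[-1,1]$ and $\int_\reals\rho=1$, put $\rho_\eta(s)=\eta^{-1}\rho(s/\eta)$ with $\eta=\epsilon^3$, and set $\phi_1=g*\rho_\eta$. Then $\phi_1\in\Ci(\reals)$ automatically, and since $g\geq 0$, $\rho_\eta\geq 0$ we get $\phi_1\geq 0$, while $\phi_1'=g'*\rho_\eta$ with $g'\leq 0$ a.e.\ and $\rho_\eta\geq 0$ gives $\phi_1'\leq 0$; this is condition (1). Because $\supp\rho_\eta\subset[-\epsilon^3,\epsilon^3]$, for $s\leq\delta_1+\epsilon^3$ every point $s-t$ with $t\in\supp\rho_\eta$ lies in $(-\infty,\delta_1+2\epsilon^3]$, so $g(s-t)\equiv 1$ and $\phi_1(s)=\int g(s-t)\rho_\eta(t)\,dt=1$, which is condition (2); symmetrically $\phi_1(s)=0$ for $s\geq\delta_2-\epsilon^3$, which is condition (3).

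For condition (4), from $\phi_1'=g'*\rho_\eta$, the a.e.\ bound on $g'$, and $\int\rho_\eta=1$ one gets $|\phi_1'|\leq 1/(L-4\epsilon^3)$ everywhere, so it suffices to check $1/(L-4\epsilon^3)\leq 1/L+\epsilon$, i.e.\ (after multiplying by $L(L-4\epsilon^3)>0$ and dividing by $\epsilon$) the elementary inequality $4\epsilon^2\leq L(L-4\epsilon^3)$. Here the two hypotheses do all the work: $\epsilon\leq 1/2$ gives $4\epsilon^3\leq\epsilon$, hence $L-4\epsilon^3\geq L-\epsilon\geq 3L/4$ using $\epsilon\leq L/4$, so $L(L-4\epsilon^3)\geq 3L^2/4$, while $4\epsilon^2\leq L\epsilon\leq L^2/4$ again by $\epsilon\leq L/4$. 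This closes condition (4) and completes the construction.

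There is no serious obstacle here — the content is entirely elementary — but the one point requiring care is the simultaneous bookkeeping of the three parameters (the ramp endpoints, the mollification width $\epsilon^3$, and the resulting slope): one must keep the transition interval nonempty, make $\phi_1$ flat out to exactly $\delta_1+\epsilon^3$ and $\delta_2-\epsilon^3$, and keep the slope within $\epsilon$ of the ideal value $1/L$. The hypothesis $\epsilon\leq\min((\delta_2-\delta_1)/4,1/2)$ is precisely what reconciles all three, the bound $\epsilon\leq 1/2$ being used exactly to absorb $4\epsilon^3$ into $\epsilon$ in the final step.
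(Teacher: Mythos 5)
Your construction is correct: the piecewise-linear ramp $g$ with plateaus ending at $\delta_1+2\epsilon^3$ and $\delta_2-2\epsilon^3$, mollified at scale $\eta=\epsilon^3$, does produce a smooth, nonnegative, non-increasing $\phi_1$ that is $\equiv 1$ for $s\leq \delta_1+\epsilon^3$, $\equiv 0$ for $s\geq \delta_2-\epsilon^3$, and satisfies $|\phi_1'|\leq 1/(L-4\epsilon^3)$ with $L=\delta_2-\delta_1$; and your reduction of condition (4) to $4\epsilon^2\leq L(L-4\epsilon^3)$, verified from $\epsilon\leq\min(L/4,1/2)$, is accurate. The route differs from the paper's in two respects. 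First, the paper does not mollify an explicit ramp; it invokes a model cutoff $\phi_2$ with $\phi_2\equiv 1$ for $s\leq 0$, $\phi_2\equiv 0$ for $s\geq 1$ and $|\phi_2'|\leq 1+\eta$ (taking $\eta=\epsilon^2$), and rescales it onto $[\delta_1+\epsilon^3,\delta_2-\epsilon^3]$, which yields the slightly different bound $\sup|\phi_1'|\leq(1+\epsilon^2)/(L-2\epsilon^3)$. Second, because of that extra factor $1+\epsilon^2$, the paper's verification of condition (4) goes through a first-order Taylor (geometric-series) estimate for $(1-2\epsilon^3/L)^{-1}$ and a chain of inequalities; your version avoids this entirely, since building the smoothing width $\epsilon^3$ into the construction gives the clean slope $1/(L-4\epsilon^3)$ and lets the final check collapse to a single cross-multiplied polynomial inequality. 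Your argument is thus more self-contained (the existence of the model cutoff $\phi_2$ is itself usually proved by exactly the mollification you carry out) at the cost of a marginally worse denominator, which the hypotheses absorb without difficulty; both yield the stated bound $|\phi_1'|\leq \frac{1}{\delta_2-\delta_1}+\epsilon$.
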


\begin{proof}
The only non-trivial part is the last condition on the derivative,
since $\phi_1'$ is supported in a set strictly smaller than size
$\delta_2 - \delta_1$.
The idea is that we can make $\phi_1$ as close to linear as we want
and the only thing to work out is the dependence on the parameter
$\epsilon>0$.  We recall that for any small
number $\eta>0$, there exists a smooth function $\phi_2: \reals \to \reals$
with $\phi_2(s) \equiv 1$ for $s \leq 0$, $\phi_2(s) \equiv 0$ for $s
\geq 1$, $\phi_2' \leq 0$, and $| \phi'_2 | \leq 1 + \eta$.
For our $\epsilon>0$, choose such a $\phi_2$ with $\eta = \epsilon^2$.

In order to simplify notation, let $\delta = \delta_2 - \delta_1 >0$.  
Let
\[
\phi_1(s) = \phi_2\left(   \frac{s  - (\delta_1 + \epsilon^3)}{\delta -
  2 \epsilon^3} \right),
\]
so that $\phi_1$ satisfies
$\phi_1 (s) \equiv 1$ for $s \leq \delta_1 + \epsilon^3$, $\phi_1(s)
\equiv 0$ for $s \geq  \delta_2 - \epsilon^3$, $\phi_1' \leq 0$, and
\begin{align*}
\sup  | \phi_1' | & \leq \frac{1}{\delta - 2 \epsilon^3} \sup |
\phi_2' | \\
& \leq \frac{1 + \epsilon^2}{\delta - 2 \epsilon^3}.
\end{align*}



We are going to make a geometric series type expansion for which we
need an upper bound.  Recall that
for $f(t) = (1-t)^{-1}$, $t\geq 0$ small, we have
$f(t)  = 1 + t f'(s)$ for some $0 \leq s \leq t$.  As $f'(s) = (1 -
s)^{-2}$, we know that for $0 \leq s \leq t$, $| f'(s) | \leq (1 -
t)^{-2}$.  Hence
\begin{equation}
  \label{E:taylor}
f(t) \leq 1 + t(1-t)^{-2}.
\end{equation}

Now we have assumed that $\epsilon \leq \min(\delta/4, 1/2)$, so we have
\[
\frac{2\epsilon^3}{\delta} \leq \frac{\epsilon^2}{2} \leq
\frac{1}{2},
\]
so that
\[
\frac{1}{1 - 2 \epsilon^3/\delta} \leq 2.
\]

Plugging this into \eqref{E:taylor} with $t = 2 \epsilon^3/\delta$, we
have 
\begin{align*}
  \frac{1 + \epsilon^2}{\delta - 2 \epsilon^3} & = \frac{1 +
    \epsilon^2}{\delta} \left( \frac{1}{1-2 \epsilon^3/\delta} \right) \\
  & \leq \frac{1 +
    \epsilon^2}{\delta} \left( 1 + \frac{2
    \epsilon^3}{\delta}\left(\frac{1}{1-2 \epsilon^3/\delta}
  \right)^2\right) \\
  & \leq \frac{1 +
    \epsilon^2}{\delta} \left( 1 + \frac{8
    \epsilon^3}{\delta}\right).
\end{align*}
Continuing, and using that $\epsilon \leq \min ( \delta/4, 1/2)$, we have
\begin{align}
  \frac{1 +
    \epsilon^2}{\delta} \left( 1 + \frac{8
    \epsilon^3}{\delta}\right) 
  & \leq ({1 + \epsilon^2}){}\left( \frac{1}{\delta} + \frac{8
    \epsilon^3}{\delta^2} \right) \notag \\
  & \leq (1 + \epsilon^2) \left( \frac{1}{\delta} + \frac{\epsilon}{2}
  \right) \notag \\
  & = \frac{1}{\delta} + \frac{\epsilon}{2} + \frac{\epsilon^2}{\delta} +
  \frac{\epsilon^3}{2} \notag \\
  & \leq \frac{1}{\delta} + \epsilon \notag \\
  & = \frac{1}{\delta_2 - \delta_1} + \epsilon. \label{E:phi-prime}
  \end{align}

\end{proof}

\bibliographystyle{alpha}
\bibliography{HC-bib}

\end{document}